\documentclass[a4paper]{amsart}
\usepackage{graphicx}
\usepackage{amssymb}
\usepackage{amsmath}
\usepackage{amsthm}
\usepackage{amscd}
\usepackage[all,2cell]{xy}

\usepackage[pagebackref,colorlinks]{hyperref}

\usepackage{multicol}

\usepackage{pifont}
\usepackage{float}
\usepackage{tikz-cd}
\allowdisplaybreaks[4]

\usepackage{array, tabularx}

\UseAllTwocells \SilentMatrices
\newtheorem{thm}{Theorem}[section]

\newtheorem{cor}[thm]{Corollary}

\newtheorem{lem}[thm]{Lemma}
\newtheorem{exm}[thm]{Example}

\theoremstyle{definition}
\newtheorem{defn}[thm]{Definition}
\theoremstyle{remark}
\newtheorem{rem}[thm]{\bf Remark}
\numberwithin{equation}{section}

\newcommand{\xrto}{\xrightarrow}

\begin{document}
\title[Support $\tau$-tilting modules over trivial extensions]{Support $\tau$-tilting modules over trivial extensions of hereditary algebras}
\author[Rong,Li] {Rong Rong, Zhi-Wei Li$^*$}

\makeatletter
\@namedef{subjclassname@2020}{\textup{2020} Mathematics Subject Classification}
\makeatother

\thanks{$^*$ The corresponding author}
\date{\today}
\subjclass[2020]{16G10, 16G20}

\thanks{rongty$\symbol{64}$xzit.edu.cn, zhiweili@jsnu.edu.cn}
\keywords{$\tau$-rigid module, support $\tau$-tilting module, trivial extension, hereditary algebra}

\maketitle

\dedicatory{}%
\commby{}%

\begin{abstract}
Let $A$ be a finite-dimensional basic hereditary algebra and let
$T(A)=A\ltimes D(A)$ be its trivial extension. Building on the
classification of indecomposable $\tau$-rigid $T(A)$-modules, we give
explicit Hom-vanishing conditions characterizing arbitrary basic
$\tau$-rigid $T(A)$-modules. For such a module $M$, we also determine
its maximal projective complement in terms of the support of the
underlying $A$-module $U(M)$, and hence obtain an explicit criterion
for $M$ to be support $\tau$-tilting.

As an application, for the linearly oriented quiver of type $A_n$,
we classify all basic rank-two $\tau$-rigid modules over
$T(\Bbbk A_n)$ and prove that their number is
\[
\binom{n}{2}\binom{n+1}{2}.
\]
We also show that every basic $\tau$-tilting
$T(\Bbbk A_n)$-module contains an indecomposable projective direct
summand. Finally, for two orientations of a quiver of type $D_4$,
we determine the corresponding support $\tau$-tilting compatibility
graphs and their face distributions, from which we obtain and compare the associated F-triangles.
\end{abstract}

\section{Introduction}

Since its inception in the foundational work of Adachi-Iyama-Reiten [AIR], $\tau$-tilting theory has emerged as one of the most active and fruitful areas in the representation theory of finite-dimensional algebras. As a natural generalization of classical tilting theory, $\tau$-tilting theory provides a complete framework for mutation. The central objects of the theory are support $\tau$-tilting modules. An almost complete support $\tau$-tilting module always has exactly two complements, while an almost complete tilting module may have zero, one or two complements. 

Support $\tau$-tilting modules have been classified for several classes of algebras. Preprojective algebras were classified by Buan–Iyama–Reiten–Scott \cite{BIRS} for the non-Dynkin case and by Mizuno \cite{Mizuno} for the Dynkin case. Adachi \cite{A} obtained the classification for Nakayama algebras. Iyama and Zhang \cite{IZ} classified them over the Auslander algebra of $\Bbbk[x]/(x^n)$. Zito \cite{Zito} treated the case of cluster-tilted algebras. 

For lower triangular matrix rings, Gao and Huang \cite{GH} initiated the study, which was later extended by Peng, Ma and Huang \cite{PMH} and further generalized by Zhang \cite{Zhang} via recollements of abelian categories. Using adjoint functor techniques, Li and Zhang constructed two classes of support 
$\tau$-tilting modules over trivial extensions by bimodules, generalizing and extending previous results on support $\tau$-tilting modules over triangular matrix rings \cite{LiZhang}. More recently, Wang and Zhang \cite{WZ} studied support $\tau$-tilting modules over trivial extensions of gentle tree algebras, and proved that the number of such modules depends only on the number of simple modules of the underlying gentle tree algebra.

 Recall that, for a finite-dimensional algebra $A$ over a field $\Bbbk$, the {\itshape trivial extension} $T(A)=A\ltimes D(A)$ of $A$ is the $\Bbbk$-algebra with multiplication given by 
$$(a,f)(b,g)=(ab,ag+fb).$$
Here, $D(A)=\mathrm{Hom}_\Bbbk(A,\Bbbk)$ is the dual $A$-bimodule and the bimodule structure is given such that for each $a\in A$ and $f\in D(A)$, $(af)(x)=f(xa)$ and $(fa)(x)=f(ax)$ for all $x\in A$. The trivial extension is an important construction in homological algebra and representation theory. It not only provides a universal passage from a finite-dimensional algebra to a class of symmetric algebras \cite{Tachikawa80,FGR}, but also serves as a bridge connecting tilting theory, cluster theory, and derived categories in modern representation theory \cite{HW, Happel,AHR, ABS}. In \cite{Tachikawa80}, Tachikawa classified the indecomposable representations of the trivial extension over hereditary algebras of finite representation type. Motivated by Tachikawa's work, Li, Xu and Zhao gave a complete determination of $\tau$-rigid objects of a trivial extension of a hereditary abelian category $\mathcal{H}$ by some right exact functor $F$ in terms of data in $\mathcal{H}$ \cite{LXZ}. 

 The aim of this paper is to give a systematic translation of those conditions of \cite[Theorem 1.1]{LXZ} into the four-family decomposition; the compatibility formulation for arbitrary basic sums of the four families; the explicit maximal projective complement $W_M=\bigoplus_{j\in J(M)}P_j$, where $J(M)=\{j\in \{1,2,\cdots,|A|\}\ | \ j\notin \operatorname{supp}U(M)\}\}$. 
 
 \vskip 5pt

\noindent{\bf Main Theorem.} (Theorem \ref{thm:main result})  {Let $M$ be a basic $T(A)$-module with a decomposition
$$M=Z(X_0)\oplus Z(R) \oplus L(S)\oplus \bigoplus_{i=1}^t\Omega Z(Y_i),$$
where $X_0$ is an $A$-module without projective direct summands, $R,S\in A\mbox{-}\mathrm{proj}$ and each $\Omega Z(Y_i)=(P_i\oplus \nu(Q_i), \left(\begin{smallmatrix}
	0&0\\
	\nu(\partial_i) & 0
\end{smallmatrix}\right))$ is a minimal syzygy $T(A)$-module associated with a minimal projective resolution $0\to P_i\xrto{\partial_i}Q_i\to Y_i\to 0$. Let $0\to R_1\to R_0\to \nu(R)\to 0$ and $0\to P_{1i}\to P_{0i}\to \tau(Y_i)\to 0$ be minimal projective resolutions. Then 

$\textup{(a)}$ $M$ is a basic $\tau$-rigid $T(A)$-module if and only if the following conditions hold.
\begin{enumerate}

\item $\mathrm{Hom}_A(X_0\oplus R\oplus S\oplus \bigoplus_{i=1}^tP_i, \tau(X_0))=0$.

\item $\mathrm{Hom}_A(\bigoplus_{i=1}^tY_i,\bigoplus_{i=1}^t\tau(Y_i)\oplus \nu(R))=0$.

\item $$\mathrm{Hom}_A(R_0\oplus \bigoplus_{i=1}^tP_{0i}, X_0\oplus R\oplus S\oplus \bigoplus_{i=1}^tP_i)=0$$ and $$\mathrm{Hom}_A( R_1\oplus \bigoplus_{i=1}^tP_{1i}, \nu(S)\oplus \bigoplus_{i=1}^t\nu(Q_i))=0.$$
\end{enumerate}

$\textup{(b)}$ $M$ is support $\tau$-tilting if and only if $M$ is $\tau$-rigid and $|M|+|J(M)|=|A|$. In that case, $(M,L(W_M))$ is the associated support $\tau$-tilting pair.}
\vskip 5pt

As applications, we give the uniform rank-two $\tau$-rigid modules classification in type $A_n$ and a formula for the number of basic rank-two $\tau$-rigid $T(\Bbbk A_n)$-modules $\binom{n}{2}\binom{n+1}{2}$;  the refined $D_4$ compatibility $F$-triangle calculations.

\section{Preliminaries}

We begin by reviewing the relevant definitions and properties of $\tau$-tilting theory and of trivial extensions of algebras that will be used throughout the paper. 
We shall let $\Bbbk$ be a field, and $A$ a finite-dimensional basic $\Bbbk$-algebra. We denote by $A\mbox{-}\mathrm{mod}$ the category of finite-dimensional left $A$-modules. 

\subsection{The module category of the trivial extension of $A$} Let $$\nu:=DA\otimes_A-\colon A\mbox{-}\mathrm{mod}\to A\mbox{-}\mathrm{mod}$$ be the Nakayama functor. Denote by $A\mbox{-}\mathrm{proj}$ and $A\mbox{-}\mathrm{inj}$ the subcategories of $A\mbox{-}\mathrm{mod}$ consisting of the finite-dimensional projective and injective left $A$-modules respectively. Then $\nu, \nu^{-}:=\mathrm{Hom}_A(DA,-)$ are mutually inverse equivalences between $A\mbox{-}\mathrm{proj}$ and $A\mbox{-}\mathrm{inj}$. 
If $A$ is a finite-dimensional hereditary algebra, then $\nu(X)=0$ for every non-projective module $X$; see \cite[Theorem VII.1.4(c)]{ASS}.

By \cite[Section 1]{FGR},  a left $T(A)$-module can be viewed as a pair $(X,\alpha)$, where $X$ is a left $A$-module and $\alpha\colon \nu(X)\to X$ is a left $A$-module homomorphism such that the composition $\nu^2(X)\xrto{\nu(\alpha)}\nu(X)\xrto{\alpha}X$ is zero. A morphism $f\colon (X,\alpha)\to (Y,\beta)$ is a morphism $f\colon X\to Y$ in $A\mbox{-}\mathrm{mod}$ such that the diagram 
\[
\xymatrix{\nu(X)\ar[r]^\alpha\ar[d]_{\nu(f)} &X\ar[d]^f\\
\nu(Y) \ar[r]^\beta& Y}
\]
is commutative. Composition is just composition in $A\mbox{-}\mathrm{mod}$.

By \cite[Proposition 1.3]{FGR}, there exist adjoint pairs $(L,U)$ and $(C,Z)$:
\[
\xymatrix{A\mbox{-}\mathrm{mod} \ar@<.5ex>[r]^-{L}&T(A)\mbox{-}\mathrm{mod} \ar@<.5ex>[l]^-{U}\ar@<.5ex>[r]^-{C}& A\mbox{-}\mathrm{mod}.\ar@<.5ex>[l]^-{Z}}
\]
Here, the functor $L$ is defined by $L(X)=(X\oplus \nu(X),\left(\begin{smallmatrix}
	0&0\\
	1&0
\end{smallmatrix}\right))$,
$L(f)=\left(\begin{smallmatrix}
	f&0\\
	0&\nu(f)
\end{smallmatrix}\right)$, for $f\in \mathrm{Hom}_A(X,Y).$
The functor $U$ is defined by $U(X,\alpha)=X$ and $U(f)=f$. The functor $Z$ is given by $Z(X)=(X,0)$ and $Z(f)=f$. The functor $C$ is defined by the cokernel
$C(X,\alpha)=\mathrm{cok}\alpha$ and $C(f)$ is the induced morphism. According to \cite[Corollary 1.6(a)]{FGR}, the functors $L,C$ are right exact, and $U,Z$ are exact. Moreover, by \cite[Corollary 1.6(c)]{FGR}, the projective objects of $T(A)\mbox{-}\mathrm{mod}$ are precisely the objects $L(P)$ with $P\in A\mbox{-}\mathrm{proj}$. 

 Let $(X,\alpha)\in T(A)\mbox{-}\mathrm{mod}$, and let $p\colon P\to \mathrm{cok}\alpha$ be a projective cover in $A\mbox{-}\mathrm{mod}$. From the exact sequence 
$$\nu(X)\xrto{\alpha}X\xrto{\pi}\mathrm{cok}\alpha\to 0,$$ the projectivity of $P$ yields a morphism $q\colon P\to X$ such that $\pi\circ q=p$. By \cite[Corollary 1.7 (a)]{FGR}, the morphism $(p,\alpha \circ \nu(q))\colon L(P)\to (X,\alpha)$ is a projective cover in $T(A)\mbox{-}\mathrm{mod}$, as illustrated in the following commutative diagram:
\[
\xymatrix{\nu(P)\oplus \nu^2(P)\ar[r]^-{\left(\begin{smallmatrix}0&0\\
1&0\end{smallmatrix}\right)}\ar[d]_-{(\nu(q),\nu(\alpha)\circ \nu^2(q))}&P\oplus \nu(P)\ar[d]_-{(q,\alpha \circ\nu(q))}\ar[r]^-{(1,0)} & P \ar[d]^-{p}\ar@{.>}[ld]_-{q}\ar[r]&0\\
\nu(X)\ar[r]^-\alpha & X \ar[r]^-{\pi_X} & \mathrm{cok}\alpha \ar[r]& 0.}
\]
Furthermore, for any object $X$ in $A\mbox{-}\mathrm{mod}$ with minimal projective resolution $0\to P_1\xrto{\partial} P_0\xrto{p} X\to 0$, the morphism $L(P_0)\xrto{(p,0)}Z(X)$ is a projective cover of $Z(X)$ in $T(A)\mbox{-}\mathrm{mod}$. In particular, the syzygy $\Omega Z(X)$ of $Z(X)$ has the explicit form: $(P_1\oplus \nu(P_0),\left(\begin{smallmatrix}
	0&0\\
	\nu(\partial) & 0
\end{smallmatrix}\right))$.

\begin{lem} \label{lem:ind of TA} \textup{\cite[Corollary 2.4]{LXZ}} Let $A$ be a finite-dimensional hereditary algebra. A left $T(A)$-module is indecomposable if and only if it is isomorphic to one of:
	\begin{enumerate}
	\item  $Z(X)$,  where $X$ is an indecomposable $A$-module;
	
	\item $L(P)$, where $P$ is an indecomposable projective $A$-module;
	
	\item $\Omega Z(X)=(P\oplus \nu(Q),	\left(\begin{smallmatrix}
	0&0\\
	\nu(\partial) & 0
\end{smallmatrix}\right))$, where $X$ is a non-projective indecomposable left $A$-module and $0\to P\xrto{\partial}Q\to X\to 0$ is a minimal projective resolution.
\end{enumerate}
\end{lem}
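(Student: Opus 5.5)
This lemma is quoted from \cite[Corollary 2.4]{LXZ}, so in the paper it is simply invoked. If I had to prove it directly, I would proceed as follows.

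\emph{Structure of a module.} Let $(X,\alpha)$ be a $T(A)$-module. Since $A$ is hereditary, $\nu$ kills every non-projective indecomposable. So if I write $X=X'\oplus P_X$ with $P_X$ projective and $X'$ without projective summands, then $\nu(X)=\nu(P_X)$ is injective. Moreover, $\nu^2(X)=\nu(\nu(P_X))$ only sees the projective-injective summands of $\nu(P_X)$. The structure map is therefore $\alpha\colon \nu(P_X)\to X$, subject to $\alpha\circ\nu(\alpha)=0$.

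\emph{Reduction to $\alpha=0$.} Factor $\alpha$ through its image $I\subseteq X$. Because $A$ is hereditary, $I$ is a quotient of an injective module and so is itself injective. Hence $I$ is a direct summand of $X$, and also of $\nu(P_X)$ up to a kernel. I would then split off pieces.

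\emph{First reduction: $L(P)$ summands.} Wherever $\alpha$ restricts to an isomorphism $\nu(P)\to \nu(P)'\subseteq X$ with $P$ a summand of $P_X$, one obtains a summand isomorphic to $L(P)$. Its splitting would be checked via the adjunction $(L,U)$ and the fact that $L(P)$ is projective-injective ($T(A)$ is symmetric).

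\emph{Second reduction: the rest.} After removing these, the remaining nonzero part of $\alpha$ maps $\nu(P_1)$ onto an injective submodule of $X$ that does not come from a summand $\nu(P)$ mapped isomorphically. Using that $\nu$ is an equivalence $A\mbox{-}\mathrm{proj}\to A\mbox{-}\mathrm{inj}$, I would write this component as $\nu(\partial)$ for a map $\partial\colon P_1\to Q$ between projectives, where $\nu(Q)$ is the relevant injective summand of $X$. The condition $\alpha\nu(\alpha)=0$ forces the module to decompose as $(P_1\oplus\nu(Q),\left(\begin{smallmatrix}0&0\\ \nu(\partial)&0\end{smallmatrix}\right))$ plus a part with zero structure map.

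\emph{Identifying the remaining pieces.} The part with zero structure map is $Z(X'')$. Decomposing $\partial$ as a direct sum of minimal projective presentations (possible since submodules of projectives are projective, so every morphism of projectives splits into $0\to P\to Q\to Y\to 0$ pieces plus trivial parts), one gets the modules $\Omega Z(Y)$ of the stated form. Here $\partial$ is injective and minimal; otherwise a summand of type $L$ or $Z$ splits off. This establishes the "only if" direction.

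\emph{Indecomposability of the listed modules.} For the converse I would compute endomorphism rings. For $Z(X)$, a morphism is just an $A$-map, so $\mathrm{End}(Z(X))=\mathrm{End}_A(X)$ is local. For $L(P)$, the adjunction gives $\mathrm{End}(L(P))\cong \mathrm{Hom}_A(P,P\oplus\nu(P))$, which is local because it is a trivial extension of the local ring $\mathrm{End}(P)$ by a nilpotent ideal. For $\Omega Z(X)$ with $X$ indecomposable non-projective, I would use that $\Omega$ induces an equivalence of the stable category of the self-injective algebra $T(A)$. Then $\underline{\mathrm{End}}(\Omega Z(X))\cong\underline{\mathrm{End}}(Z(X))$ is local, and $\Omega Z(X)$ has no projective summand by minimality, so it is indecomposable.

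\emph{Main obstacle.} The key difficulty is the second reduction: showing that the splitting of the structure map is compatible with the constraint $\alpha\nu(\alpha)=0$ and with the decomposition of $X$. This requires a careful matrix computation, changing bases within $\nu(P_X)$ and $X$ simultaneously via $T(A)$-automorphisms of the form $\left(\begin{smallmatrix}f&0\\0&\nu(f)\end{smallmatrix}\right)$.
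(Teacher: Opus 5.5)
The paper gives no proof of this lemma; it simply quotes \cite[Corollary 2.4]{LXZ}. The normal form it relies on later, in Lemma~\ref{lem:general tau-rigid}, namely $Z(X)\oplus(P\oplus I,\left(\begin{smallmatrix}0&0\\ \alpha&0\end{smallmatrix}\right))$ with $\alpha$ an epimorphism, is exactly what your ``only if'' direction has to produce.

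Your ``if'' direction is correct. So is your final step of splitting a map $\partial$ of projectives into a minimal resolution, isomorphism components and a kernel.

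\textbf{The gap.} It is the step you yourself call the main obstacle. You assert that $\alpha\circ\nu(\alpha)=0$ ``forces'' $(X,\alpha)$ to split as $(P_1\oplus\nu(Q),\left(\begin{smallmatrix}0&0\\ \nu(\partial)&0\end{smallmatrix}\right))$ plus a summand with zero structure map. This is stated, not proved, and it is the entire content of that direction. It is not a formality:
\begin{enumerate}
\item $\mathrm{Im}\,\alpha$ can contain projective-injective summands of $X$, and so meet $P_X$. A priori, then, the projective source of $\alpha$ and its injective image need not be independent direct summands of $X$. Nothing you say rules out $\alpha$ being nonzero on $\nu(\mathrm{Im}\,\alpha)$.
\item You cannot split on the domain side either. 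In general $\mathrm{Im}\,\alpha$ is not a direct summand of $\nu(P_X)$: for $\Omega Z(Y)$ the epimorphism $\nu(\partial)$ has kernel $\tau(Y)$, which is not injective.
\end{enumerate}
Your first reduction (splitting off $L(P)$'s) does not address this, and it turns out to be unnecessary.

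\textbf{The missing idea.} A short argument using right exactness of $\nu$ suffices; no base-change computation is needed.
\begin{enumerate}
\item Put $I=\mathrm{Im}\,\alpha$, which is injective as you note, and choose any complement, so $X=I\oplus C$.
\item Then $\alpha=\iota_I\bar\alpha$ with $\bar\alpha\colon \nu(I)\oplus\nu(C)\to I$ an epimorphism, and $\alpha\circ\nu(\alpha)=\iota_I\circ\bar\alpha\circ\nu(\iota_I)\circ\nu(\bar\alpha)$.
\item Since $\nu$ is right exact, $\nu(\bar\alpha)\colon\nu^2(X)\to\nu(I)$ is an epimorphism. Hence $\alpha\circ\nu(\alpha)=0$ forces $\bar\alpha\circ\nu(\iota_I)=0$, i.e. the structure map vanishes on $\nu(I)$.
\item Write $C=C'\oplus P_C$ with $P_C$ projective and $C'$ without projective summands, so $\nu(C')=0$. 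This gives $(X,\alpha)\cong Z(C')\oplus(P_C\oplus I,\left(\begin{smallmatrix}0&0\\ \alpha_2&0\end{smallmatrix}\right))$, where $\alpha_2\colon\nu(P_C)\to I$ is still an epimorphism.
\item Now $I=\nu(Q)$ and $\alpha_2=\nu(\partial)$ for a unique $\partial\colon P_C\to Q$. Your splitting of $\partial$ then finishes:
\begin{itemize}
\item[$\circ$] $\ker\partial$ gives projective $Z$-summands;
\item[$\circ$] isomorphism components of $\partial$ give the $L(E)$ summands;
\item[$\circ$] because $\nu(\partial)$ is an epimorphism, $\operatorname{cok}\partial$ has no projective summands, so the remaining part is $\bigoplus_i\Omega Z(Y_i)$.
\end{itemize}
\end{enumerate}
With this step inserted, your outline becomes a complete proof.
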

 For simplicity, we call the indecomposable $T(A)$-module $\Omega Z(X)$ above a {\it minimal syzygy module}.

\subsection{The support $\tau$-tilting modules} We recall the definition of support $\tau$-tilting modules in \cite{AIR}. Let $A$ be a finite-dimensional $\Bbbk$-algebra and $\tau$ the Auslander-Reiten translation in $A\mbox{-}\mathrm{mod}$ \cite{ASS}. For a left $A$-module $M$, we use $|M|$ to denote the number of isomorphism classes of indecomposable direct summands of $M$.

\begin{defn} \textup{\cite[Definition 0.1]{AIR}} Let $M$ be a left $A$-module in $A\mbox{-}\mathrm{mod}$.
\begin{enumerate}
\item We call $M$ {\itshape $\tau$-rigid} if $\mathrm{Hom}_A(M,\tau(M))=0$.

\item We call a pair $(M,P)$ a {\itshape $\tau$-rigid} pair if $M$ is $\tau$-rigid and $P\in A\mbox{-}\mathrm{proj}$ such that $\mathrm{Hom}_A(P,M)=0$.

\item We call a pair $(M,P)$ a {\itshape support $\tau$-tilting} pair if $(M,P)$ is a $\tau$-rigid pair such that $|M|+|P|=|A|$.

\item We call $M$ {\itshape support $\tau$-tilting} if there is $P\in A\mbox{-}\mathrm{proj}$ such that $(M,P)$ is a support $\tau$-tilting pair.

\item We call $M$ {\itshape $\tau$-tilting} if $(M,0)$ is a support $\tau$-tilting pair.
\end{enumerate}
\end{defn}
We recall that a module is said to be {\itshape basic} provided it is a direct sum of pairwise non-isomorphic indecomposable modules. By \cite[Proposition 2.3]{AIR}, we know that $M$ determines $P$ uniquely in a basic support $\tau$-tilting pair $(M,P)$. Here, we call $(M,P)$ {\itshape basic} if $M$ and $P$ are basic. Thus we can identify basic support $\tau$-tilting modules with basic support $\tau$-tilting pairs. We use $\tau\mbox{-}\mathsf{tilt}~ A$ (respectively $s\tau\mbox{-}\mathsf{tilt}~A$) to denote the set of isomorphism classes of basic $\tau$-tilting $A$-modules (respectively support $\tau$-tilting $A$-modules).

\section{The main result}
In this section, we give a proof of our main result.

The following result classifies the $\tau$-rigid modules of the trivial extension $T(A)=A\ltimes D(A)$ of a finite-dimensional basic hereditary $\Bbbk$-algebra $A$.

\begin{lem}\label{lem:general tau-rigid} \textup{\cite[Theorem 1.1]{LXZ}}
A left $T(A)$-module $M$ is $\tau$-rigid if and only if it is isomorphic to $$Z(X)\oplus  (P\oplus I, \left(\begin{smallmatrix}
	0&0\\
	\alpha&0
\end{smallmatrix}\right)),$$
where $P$ is a projective $A$-module and $I$ is an injective $A$-module such that $\alpha$ is an epimorphism, and the following conditions hold:
	\begin{enumerate}

	\item  $\mathrm{Hom}_A(X\oplus P,\tau(X))=0$;

\item $\mathrm{Hom}_A(\mathrm{ker} \alpha, \tau(\mathrm{ker} \alpha)\oplus \tau(\nu(X))))=0$;

\item $\mathrm{Hom}_A(R_0\oplus P_0,X\oplus P)=0=\mathrm{Hom}_A(R_1\oplus P_1, I)$, where $0\to R_1\to R_0\to \nu(X)\to 0$ and $0\to P_1\to P_0\to \mathrm{ker}\alpha\to 0$ are projective resolutions of $\nu(X)$ and $\mathrm{ker}\alpha$ respectively.
\end{enumerate}
\end{lem}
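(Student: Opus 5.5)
We will prove Lemma \ref{lem:general tau-rigid} (which is \cite[Theorem 1.1]{LXZ}) by combining the classification of indecomposables in Lemma \ref{lem:ind of TA} with an explicit computation of $\tau$ in $T(A)\mbox{-}\mathrm{mod}$. We then use the standard criterion of \cite{AIR}: for a minimal projective presentation $Q_1\xrto{f}Q_0\to M\to 0$, $\mathrm{Hom}(M,\tau N)=0$ if and only if $\mathrm{Hom}(f,N)\colon \mathrm{Hom}(Q_0,N)\to\mathrm{Hom}(Q_1,N)$ is surjective.

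\emph{Step 1: the shape of $M$.} By Lemma \ref{lem:ind of TA}, every $T(A)$-module is a direct sum of modules $Z(X')$, $L(P')$ and $\Omega Z(Y)$. Each $L(P')=(P'\oplus\nu P',\left(\begin{smallmatrix}0&0\\1&0\end{smallmatrix}\right))$ and each $\Omega Z(Y)=(P\oplus\nu Q,\left(\begin{smallmatrix}0&0\\ \nu\partial&0\end{smallmatrix}\right))$ has the form $(P\oplus I,\left(\begin{smallmatrix}0&0\\ \alpha&0\end{smallmatrix}\right))$. Here $I$ is injective and $\alpha\colon\nu P\to I$ is an epimorphism, because $\nu\partial$ is epi when $\nu$ is applied to a projective resolution in the hereditary case: the cokernel is $\nu Y$, which is $0$ for non-projective $Y$. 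Conversely, any such pair decomposes into these pieces. The first piece is $\ker\alpha$, which is projective (being a submodule of a projective), with $\nu(\ker\alpha)$-part split off. The second piece is a minimal resolution of the non-projective part. So every module is $Z(X)\oplus(P\oplus I,\alpha)$, and only the conditions need proof.

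\emph{Step 2: projective presentations.} Write $N=(P\oplus I,\alpha)$ and $K=\ker\alpha$. Using the projective-cover description recalled above, we obtain minimal presentations in $T(A)$-mod. For $Z(X)$: $\Omega Z(X)=(X_1\oplus\nu X_0,\ldots)$ gives a presentation whose top is $L(X_0)$ and whose second term is $L(X_1)\oplus L(R_0)$-type terms, where $R_0$ covers the cokernel part $\nu X$. For $N$: $\mathrm{cok}\left(\begin{smallmatrix}0&0\\ \alpha&0\end{smallmatrix}\right)=P$, so the projective cover is $L(P)$, with kernel $(K\oplus\nu^{2}\text{-part},\ldots)$ covered by $L(P_0)$, where $P_0\to K$ is the relevant resolution. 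By adjunction, $\mathrm{Hom}_{T(A)}(L(Q),(Y,\beta))\cong\mathrm{Hom}_A(Q,Y)$, and the surjectivity criterion becomes surjectivity of explicit maps of $\mathrm{Hom}_A$-spaces.

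\emph{Step 3: the cross terms.} We evaluate $\mathrm{Hom}(M,\tau M)=0$ on the four blocks $Z\to Z$, $Z\to N$, $N\to Z$, $N\to N$. Each surjectivity condition will be rewritten, via the Auslander–Reiten formula in the hereditary algebra $A$ ($\mathrm{Ext}^1_A(U,V)\cong D\mathrm{Hom}_A(V,\tau U)$) together with $\mathrm{Ext}^2=0$, either as vanishing of an $\mathrm{Ext}^1$, giving conditions (1) and (2), or as vanishing of a Hom from the projective terms $R_0,R_1,P_0,P_1$, giving condition (3). The term $\tau(\nu X)$ in (2) arises because the presentation of $Z(X)$ involves $\nu X$ as the cokernel of the $\nu$-twisted part. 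Similarly, the splitting $I$ versus $X\oplus P$ in (3) reflects that $L(Q)$ maps to $(Y,\beta)$ through its two layers $Q$ and $\nu Q$.

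\textbf{Main obstacle.} The hard part is Step 3: showing that surjectivity of the induced maps between Hom-spaces is \emph{exactly} equivalent to the listed vanishings, rather than merely implied by them. This requires keeping track of the compatibility square $\beta\circ\nu(f)=f\circ\alpha$ for morphisms between non-trivial pairs. My first approach would be to compute $\tau_{T(A)}=\Omega^2$ (since $T(A)$ is symmetric, $\tau\cong\Omega^{2}$) directly on each type of indecomposable. I would then read off $\mathrm{Hom}(M,\Omega^2M)$ modulo maps factoring through projectives, using $\mathrm{Hom}(M,\tau N)\cong D\overline{\mathrm{Ext}}^1$-type formulas.
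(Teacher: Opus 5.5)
The route you chose, the presentation criterion of \cite{AIR} combined with $\mathrm{Hom}_{T(A)}(L(Q),-)\cong\mathrm{Hom}_A(Q,U(-))$, does work. The minimal presentations are $L(X_1\oplus R_0)\to L(X_0)\to Z(X)\to 0$ and $L(P_0)\to L(P)\to N\to 0$, and the syzygy is exactly $\Omega N=Z(K)$ with $K=\ker\alpha$.

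You should state the criterion correctly: $\mathrm{Hom}(Y,\tau V)=0$ if and only if $\mathrm{Hom}(p_1,Y)$ is onto, for $p_1$ a minimal presentation of $V$. Your version swaps the roles, which is harmless only because all four blocks get tested.

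The real problem is that the proposal stops where the proof begins. Step 3 carries all the content and is only announced. Concretely, it requires the following.

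\emph{Block $\mathrm{Hom}(N,\tau N)$.} Take $\iota\colon K\to\nu P$, $p_0\colon P_0\to K$ and $u=(u_1,u_2)\colon P\to P\oplus I$. This block reduces to the map $u\mapsto(0,\alpha\,\nu(u_1)\,\iota\,p_0)$ into $\mathrm{Hom}_A(P_0,P)\oplus\mathrm{Hom}_A(P_0,I)$. You must show it is onto if and only if $\mathrm{Hom}_A(P_0,P)=0$, $\mathrm{Hom}_A(P_1,I)=0$ and $\mathrm{Ext}^1_A(K,K)=0$. This uses four facts: $\nu(u_1)$ runs through $\mathrm{End}_A(\nu P)$; every map $K\to\nu P$ extends over $\iota$; $p_0^*\colon\mathrm{Hom}(K,I)\to\mathrm{Hom}(P_0,I)$ is injective with cokernel $\mathrm{Hom}(P_1,I)$; and $\mathrm{Hom}(K,\nu P)\to\mathrm{Hom}(K,I)\to\mathrm{Ext}^1(K,K)\to 0$ is exact.

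\emph{Block $\mathrm{Hom}(N,\tau Z(X))$.} This reduces to $u\mapsto(u\partial,\ \beta\,\nu(u)\,r)$ on $\mathrm{Hom}_A(X_0,P\oplus I)$. Here $\beta$ is the structure map of $N$ and $r\colon R_0\to\nu X_0$ lifts the cover of $\nu X$. The two components are coupled through $u_1$. They separate once you split $X=X'\oplus X''$ into non-projective and projective parts: then $\partial$ lands in $X'_0$, while $r$ can be chosen to land in $\nu X''$ because $\nu X'=0$. Only then do you obtain $\mathrm{Ext}^1_A(X,P)=0$ separately from $\mathrm{Hom}_A(R_0,P)=0$, $\mathrm{Hom}_A(R_1,I)=0$ and $\mathrm{Ext}^1_A(\nu X,K)=0$. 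The last of these is the actual source of $\tau(\nu X)$ in (2).

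\emph{Blocks $\mathrm{Hom}(Z(X),\tau Z(X))$ and $\mathrm{Hom}(Z(X),\tau N)$.} These give $\mathrm{Ext}^1_A(X,X)=0=\mathrm{Hom}_A(R_0,X)$ and $\mathrm{Hom}_A(P_0,X)=0$. The hereditary Auslander--Reiten formula then turns the whole list into (1)--(3). The resolutions in (3) must be read as minimal: a trivial summand $Q\xrightarrow{1}Q$ would add spurious conditions, and for instance $L(S)$ would then violate (3).

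Two statements in your text would actively derail this argument.

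First, $K=\ker\alpha$ is a submodule of the injective module $\nu P$, not of a projective one. It is not projective in general; for $\Omega Z(Y)$ it is $\tau Y$. If it were projective, the $K$-parts of (2) and (3) would be vacuous.

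Second, the fallback you propose for the ``main obstacle'' computes the wrong invariant. Reading off $\mathrm{Hom}(M,\Omega^2M)$ modulo maps factoring through projectives gives $\underline{\mathrm{Hom}}(M,\tau M)\cong D\mathrm{Ext}^1_{T(A)}(M,M)$, which detects rigidity only. $\tau$-rigidity requires the full $\mathrm{Hom}_{T(A)}(M,\tau M)$ to vanish. For instance, over $T(\Bbbk A_2)$ one has $\Omega Z(P_1)=Z(S_1)$ and $\mathrm{Hom}_A(S_1,P_1)=0$, so $Z(P_1)$ is rigid. Yet $\mathrm{Hom}_{T(A)}(Z(P_1),\tau Z(P_1))\cong\mathrm{Hom}_A(P_1,S_1)\neq 0$, and the nonzero map factors through $L(P_2)$. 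This agrees with $\mathrm{Hom}_A(R_0,R)=\mathrm{Hom}_A(P_1,P_1)\neq 0$ for $R=P_1$.
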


\begin{lem} \label{lem:ind of tau-rigid}\textup{\cite[Corollary 3.3]{LXZ}} Every indecomposable $\tau$-rigid $T(A)$-module belongs to exactly one of the following four mutually disjoint families.
\begin{enumerate}
\item Modules of type $Z(X)$, where $X$ is a non-projective indecomposable $\tau$-rigid $A$-module.

\item Modules of type $Z(R)$, where $R$ is an indecomposable projective $A$-module such that 
$\mathrm{Hom}_A(R_0,R)=0$ for a projective cover $R_0\to \nu(R)$.

\item Modules of type $L(S)$, where $S$ is an indecomposable projective $A$-module.

\item Minimal syzygy modules $\Omega Z(Y)=(P\oplus \nu(Q), \left(\begin{smallmatrix}
	0&0\\
	\nu(\partial) & 0
\end{smallmatrix}\right))$, where $P,Q$ are projective $A$-modules such that 
\begin{itemize}
\item[\ding{172}] $Y$ is a non-projective indecomposable $\tau$-rigid $A$-module,

\item[\ding{173}] $0\to P\xrto{\partial} Q\to Y\to 0$ is a minimal projective resolution,

\item[\ding{174}] $\mathrm{Hom}_A(P_0,P)=0=\mathrm{Hom}_A(P_1,\nu(Q))$, where $0\to P_1\to P_0\to \tau(Y)\to 0$ is a minimal projective resolution.
\end{itemize}
\end{enumerate}
\end{lem}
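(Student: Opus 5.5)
The plan is to combine the classification of indecomposable $T(A)$-modules in Lemma \ref{lem:ind of TA} with the $\tau$-rigidity criterion of Lemma \ref{lem:general tau-rigid}. I take an indecomposable $T(A)$-module $M$. By Lemma \ref{lem:ind of TA} it is of the form $Z(X)$, $L(S)$ or $\Omega Z(Y)$. For each shape I will write $M$ in the normal form $Z(X')\oplus(P\oplus I,\left(\begin{smallmatrix}0&0\\ \alpha&0\end{smallmatrix}\right))$ of Lemma \ref{lem:general tau-rigid}. Then I will translate conditions (1)--(3) there into the conditions listed in the four families. Throughout I will use minimal projective resolutions in condition (3); being minimal, they are direct summands of any other projective resolution, so they give the weakest form of the condition.

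\emph{Case $M=Z(X)$.} Here $P=I=0$, so condition (2) and the second half of (3) are vacuous. The remaining conditions read $\mathrm{Hom}_A(X,\tau X)=0$ and $\mathrm{Hom}_A(R_0,X)=0$, where $R_0\to\nu(X)$ is a projective cover.
\begin{itemize}
\item If $X$ is non-projective, then $\nu(X)=0$ because $A$ is hereditary, so $R_0=0$ and the conditions reduce to $X$ being $\tau$-rigid. This is family (1).
\item If $X=R$ is projective, then $\tau R=0$ and the conditions reduce to $\mathrm{Hom}_A(R_0,R)=0$. This is family (2).
\end{itemize}

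\emph{Case $M=L(S)$.} Here $X'=0$, $P=S$, $I=\nu(S)$ and $\alpha=1$, so $\ker\alpha=0$. Every condition is vacuous, which is consistent with $L(S)$ being projective. This is family (3).

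\emph{Case $M=\Omega Z(Y)$,} with $0\to P\xrto{\partial}Q\to Y\to0$ a minimal projective resolution and $Y$ indecomposable non-projective. Here $X'=0$, $I=\nu(Q)$ and $\alpha=\nu(\partial)$. The key input is the standard exact sequence
\[
0\to\tau Y\to\nu P\xrto{\nu(\partial)}\nu Q\to\nu Y\to0,
\]
in which $\nu Y=0$. It shows that $\alpha$ is an epimorphism and that $\ker\alpha\cong\tau Y$.
\begin{itemize}
\item Condition (1) becomes $\mathrm{Hom}_A(P,0)=0$, which is vacuous.
\item Condition (3) becomes exactly item \ding{174}.
\item Condition (2) becomes $\mathrm{Hom}_A(\tau Y,\tau^2Y)=0$.
\end{itemize}
It remains to show that this last vanishing is equivalent to $\mathrm{Hom}_A(Y,\tau Y)=0$, i.e.\ to \ding{172}. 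For this I will use that over a hereditary algebra $\tau$ induces an isomorphism $\mathrm{Hom}_A(M,N)\cong\mathrm{Hom}_A(\tau M,\tau N)$ whenever $M$ has no projective summands [ASS, Ch.~VIII]. Applying it with $M=Y$ and $N=\tau Y$ gives $\mathrm{Hom}_A(Y,\tau Y)\cong\mathrm{Hom}_A(\tau Y,\tau^2Y)$. This is the step I expect to need the most care. In particular, when $\tau Y$ is projective I must check that both sides vanish together. Here $\tau^2Y=0$, so I will verify directly that $\mathrm{Hom}_A(Y,\tau Y)=0$, either by the formula above or via $\mathrm{Hom}_A(Y,\tau Y)\cong D\mathrm{Ext}^1_A(Y,Y)$ together with $\tau^{-}\tau Y\cong Y$. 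This case gives family (4).

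Conversely, each module in families (1)--(4) is indecomposable by Lemma \ref{lem:ind of TA}. Running the equivalences above backwards shows that it satisfies the conditions of Lemma \ref{lem:general tau-rigid}, hence is $\tau$-rigid. Finally, the four families are mutually disjoint. Families (1) and (2) are distinguished by whether $U(M)$ is projective. Both have $\alpha=0$, whereas $L(S)$ and $\Omega Z(Y)$ have $\alpha\neq0$. $L(S)$ is projective while $\Omega Z(Y)$ is not, since $\ker\nu(\partial)=\tau Y\neq0$.
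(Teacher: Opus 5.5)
Your proposal is correct. The paper does not prove this lemma itself (it cites \cite[Corollary 3.3]{LXZ}), but your case-by-case translation of Lemma~\ref{lem:general tau-rigid} is exactly the indecomposable specialization of the paper's proof of Theorem~\ref{thm:main result}(a), using the same inputs: $\nu(X)=0$ for $X$ without projective summands, $\ker\nu(\partial)\cong\tau(Y)$, and $\mathrm{Hom}_A(Y,N)\cong\mathrm{Hom}_A(\tau Y,\tau N)$. One small remark: that last isomorphism needs no hypothesis on $N$, so your separate treatment of the case where $\tau Y$ is projective is unnecessary. In that case $\mathrm{Hom}_A(Y,\tau Y)=0$ anyway, because over a hereditary algebra every morphism from a module without projective summands to a projective module is zero.
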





For a basic $T(A)$-module $M$, recall that there is an adjoint pair $(L,U)$, where $U\colon T(A)\mbox{-}\mathrm{mod}\to A\mbox{-}\mathrm{mod}$ is the forgetful functor. Let $P_1,\cdots, P_n$ be the representatives of the indecomposable projective $A$-modules, where $n=|A|$. Define 
$$\operatorname{supp}X:=\{j\ | \ \mathrm{Hom}_A(P_j,X)\neq 0\}.$$
Then define the set of admissible complement vertices by
$$J(M)=\{j\in \{1,2,\cdots,|A|\}\ | \ j\notin \operatorname{supp}U(M)\}\}$$
Then 
$$W_M=\bigoplus_{j\in J(M)}P_j$$
is the largest projective module such that $\mathrm{Hom}_{A}(W_M,U(M))=0$.

\begin{thm} \label{thm:main result} Let $M$ be a basic $T(A)$-module with a decomposition
$$M=Z(X_0)\oplus Z(R) \oplus L(S)\oplus \bigoplus_{i=1}^t\Omega Z(Y_i),$$
where $X_0$ is an $A$-module without projective direct summands, $R,S\in A\mbox{-}\mathrm{proj}$ and each $\Omega Z(Y_i)=(P_i\oplus \nu(Q_i), \left(\begin{smallmatrix}
	0&0\\
	\nu(\partial_i) & 0
\end{smallmatrix}\right))$ is a minimal syzygy $T(A)$-module associated with a minimal projective resolution $0\to P_i\xrto{\partial_i}Q_i\to Y_i\to 0$. Let $0\to R_1\to R_0\to \nu(R)\to 0$ and $0\to P_{1i}\to P_{0i}\to \tau(Y_i)\to 0$ be minimal projective resolutions. Then 

$\textup{(a)}$ $M$ is a basic $\tau$-rigid $T(A)$-module if and only if the following conditions hold.
\begin{enumerate}

\item $\mathrm{Hom}_A(X_0\oplus R\oplus S\oplus \bigoplus_{i=1}^tP_i, \tau(X_0))=0$.

\item $\mathrm{Hom}_A(\bigoplus_{i=1}^tY_i,\bigoplus_{i=1}^t\tau(Y_i)\oplus \nu(R))=0$.

\item $$\mathrm{Hom}_A(R_0\oplus \bigoplus_{i=1}^tP_{0i}, X_0\oplus R\oplus S\oplus \bigoplus_{i=1}^tP_i)=0$$ and $$\mathrm{Hom}_A( R_1\oplus \bigoplus_{i=1}^tP_{1i}, \nu(S)\oplus \bigoplus_{i=1}^t\nu(Q_i))=0.$$
\end{enumerate}

$\textup{(b)}$ $M$ is support $\tau$-tilting if and only if $M$ is $\tau$-rigid and $|M|+|J(M)|=|A|$. In that case, $(M,L(W_M))$ is the associated support $\tau$-tilting pair.
\end{thm}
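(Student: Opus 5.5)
For part (a), the plan is to write the given decomposition in the form required by Lemma \ref{lem:general tau-rigid} and then translate that lemma's three conditions term by term. Set $X:=X_0\oplus R$, so that $Z(X_0)\oplus Z(R)=Z(X)$. Put $P:=S\oplus\bigoplus_i P_i$ and $I:=\nu(S)\oplus\bigoplus_i\nu(Q_i)$, and take
\[
\alpha=1_{\nu(S)}\oplus\bigoplus_i\nu(\partial_i)\colon \nu(S)\oplus\bigoplus_i\nu(P_i)\to \nu(S)\oplus\bigoplus_i\nu(Q_i).
\]
The map $\nu(\partial_i)$ is an epimorphism: $\nu$ is right exact, and $\nu(Y_i)=0$ because $Y_i$ is non-projective and $A$ is hereditary. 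Its kernel is $\tau(Y_i)$, by the standard sequence $0\to\tau Y_i\to\nu P_i\to\nu Q_i\to\nu Y_i\to 0$. Hence $\alpha$ is surjective with $\ker\alpha\cong\bigoplus_i\tau(Y_i)$, and $L(S)\oplus\bigoplus_i\Omega Z(Y_i)=(P\oplus I,\left(\begin{smallmatrix}0&0\\ \alpha&0\end{smallmatrix}\right))$.

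Conversely, a $\tau$-rigid module is a direct sum of indecomposable $\tau$-rigid modules. Lemma \ref{lem:ind of tau-rigid} places each of these in one of the four families, so every basic $\tau$-rigid $M$ has the shape in the statement. The same lemma is what forces the $Y_i$ to be non-projective and indecomposable.

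The substitution then proceeds as follows.
\begin{enumerate}
\item Condition (1) of Lemma \ref{lem:general tau-rigid} reads $\mathrm{Hom}_A(X_0\oplus R\oplus P,\tau(X_0\oplus R))=0$. Since $\tau(R)=0$, this is exactly condition (1) of the theorem.
\item Since $\nu(X)=\nu(X_0)\oplus\nu(R)=\nu(R)$, condition (2) becomes $\mathrm{Hom}_A(\bigoplus\tau Y_i,\bigoplus\tau\tau Y_i\oplus\tau\nu R)=0$. To reach the stated form I use the Auslander--Reiten formula together with hereditarity: for non-projective indecomposable $Y,Y'$ one has $\mathrm{Hom}_A(\tau Y,\tau Y')\cong\mathrm{Hom}_A(Y,Y')$ modulo maps factoring through projectives. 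I also expect $\mathrm{Hom}_A(\tau Y_i,\tau\nu R)\cong\mathrm{Hom}_A(Y_i,\nu R)$, since $\nu R$ is injective and $\tau\nu R=\tau\tau^{-}$ applied to its non-injective part. These identifications need care and are the delicate step.
\item For condition (3), the minimal projective resolution of $\nu(X)=\nu(R)$ is $R_1\to R_0$. A minimal resolution of $\ker\alpha=\bigoplus\tau Y_i$ is the direct sum of the resolutions $P_{1i}\to P_{0i}$. Substituting these gives the two vanishing conditions in the theorem. Minimality is harmless, because the Hom-vanishing conditions are insensitive to adding projective summands common to both terms when one uses the minimal versions. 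I would remark that the lemma allows any resolution and that the minimal one is a direct summand.
\end{enumerate}

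The main obstacle is step 2, matching $\mathrm{Hom}(\ker\alpha,\tau\ker\alpha\oplus\tau\nu X)$ with $\mathrm{Hom}(\bigoplus Y_i,\bigoplus\tau Y_i\oplus\nu R)$. First I would try to prove $\mathrm{Hom}(\tau Y,\tau N)=0\iff\mathrm{Hom}(Y,N)=0$ for $Y$ without projective summands, using that $\tau$ is an equivalence between the stable categories modulo projectives and injectives. The point is that over a hereditary algebra a map from a module without projective summands to $N$ that factors through an injective vanishes under the extra conditions (3). If this fails in general, I would fall back on deriving the missing implication from the condition $\mathrm{Hom}(P_{0i},P_j)=0$ in (3).

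For part (b), I use the bijection between basic support $\tau$-tilting pairs and their $M$-component, from \cite{AIR}. If $(M,Q)$ is a support $\tau$-rigid pair of $T(A)$-modules, then $Q=L(W)$ with $W$ projective. By adjunction, $\mathrm{Hom}_{T(A)}(L(W),M)=\mathrm{Hom}_A(W,U(M))$, so $Q$ is a summand of $L(W_M)$, and $L(W_M)$ is itself admissible. Since $|L(W_M)|=|J(M)|$ and $|T(A)|=|A|$, $M$ is support $\tau$-tilting if and only if $\tau$-rigid $M$ satisfies $|M|+|J(M)|=|A|$. Sufficiency is immediate. For necessity, the $Q$ of the pair has $|Q|=|A|-|M|\le|J(M)|$, while $(M,L(W_M))$ is still $\tau$-rigid. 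Since $|M|+|Q'|\le|A|$ for every $\tau$-rigid pair $(M,Q')$, maximality forces $Q=L(W_M)$.
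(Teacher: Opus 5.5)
Your set-up is the paper's: the same $X$, $P$, $I$ and $\alpha$, the same identification $\ker\alpha\cong\bigoplus_i\tau(Y_i)$, and the same two-inequality argument for (b). There your adjunction $\mathrm{Hom}_{T(A)}(L(W),M)\cong\mathrm{Hom}_A(W,U(M))$ is a tidier replacement for the paper's explicit Hom formula. However, step 2 of (a) is never proved, and the justification you sketch aims at the wrong maps.

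What you need is the following fact: for $A$ hereditary, $Y$ without projective summands and $N$ arbitrary,
\[
\mathrm{Hom}_A(\tau Y,\tau N)\cong\mathrm{Hom}_A(Y,N).
\]
You apply it with $Y=Y_i$ and $N=\bigoplus_j\tau(Y_j)\oplus\nu(R)$; the paper cites \cite[Corollary IV.2.15(b)]{ASS} for this. Starting from the stable equivalence $\underline{\mathrm{Hom}}_A(Y,N)\cong\overline{\mathrm{Hom}}_A(\tau Y,\tau N)$, you must show two things. No nonzero map $Y\to N$ factors through a \emph{projective}, and no nonzero map $\tau Y\to\tau N$ factors through an \emph{injective}. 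Maps out of $Y$ through injectives are irrelevant.

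Both vanishings follow unconditionally from hereditarity, not from condition (3):
\begin{itemize}
\item For $Y\to P'\to N$ with $P'$ projective, the image of $Y\to P'$ is projective. It therefore splits off $Y$, so it is zero.
\item For $\tau Y\to E\to\tau N$ with $E$ injective, the image of $E\to\tau N$ is injective, since quotients of injectives are injective. It is then a summand of $\tau N$, which has no nonzero injective summands, so it is zero.
\end{itemize}
Hence Hom and stable Hom agree on both sides. In particular $\nu(R)$ needs no separate treatment; your formula $\tau\nu R=\tau\tau^{-}(\cdots)$ has no meaning for the injective module $\nu(R)$. No fallback to (3) is needed either. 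As written, the equivalence for condition (2) is only ``expected'', so your proof of (a) is incomplete exactly at the step you identify as the main obstacle.

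A smaller inaccuracy is in step 3. Hom-vanishing conditions are \emph{not} insensitive to adding a common summand $P''\xrightarrow{1}P''$ to a resolution. Doing so adds $\mathrm{Hom}_A(P'',-)$ to both conditions and in general makes them stronger. Lemma~\ref{lem:general tau-rigid} must be applied with minimal resolutions, as the paper does. Alternatively, read it as an existence statement; then the minimal resolution, being a direct summand of every resolution, gives the weakest conditions.
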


\begin{proof}
$\textup{(a)}$
We apply Lemma~\ref{lem:general tau-rigid} to the decomposition of $M$. Set
$$
X=X_0\oplus R,\qquad
P=S\oplus\bigoplus_{i=1}^{t}P_i,
\qquad
I=\nu(S)\oplus\bigoplus_{i=1}^{t}\nu(Q_i),
$$
and let
$$
\alpha=\left(\begin{smallmatrix}1_{\nu(S)}&0\\
0&\bigoplus_{i=1}^t\nu(\partial_i)\end{smallmatrix}\right)
\colon
\nu(P)\longrightarrow I.
$$
With this notation, $M$ is of the form appearing in
Lemma~\ref{lem:general tau-rigid}.

Since $X_0$ has no projective direct summands and $A$ is
hereditary, \cite[Theorem VII.1.4(c)]{ASS} gives
$
\nu(X_0)=0.
$
Consequently,
$
\nu(X)=\nu(X_0\oplus R)\cong\nu(R).$
Moreover, since $R$ is projective,
$
\tau(X)=\tau(X_0\oplus R)\cong\tau(X_0).
$

We now translate the three conditions in
Lemma~\ref{lem:general tau-rigid}. For the first condition, we have $X\oplus P=X_0\oplus R\oplus S\oplus \bigoplus_{i=1}^tP_i$
and $\tau(X)=\tau(X_0)$. Therefore,
$\mathrm{Hom}_A(X\oplus P,\tau(X))=0$
is equivalent to 
$$\mathrm{Hom}_A(X_0\oplus R\oplus S\oplus \bigoplus_{i=1}^tP_i, \tau(X_0))=0$$
which is precisely condition \textup{(1)}.

Next, since $\mathrm{ker}1_{\nu(S)}=0$, we obtain
$\mathrm{ker}\alpha=\bigoplus_{i=1}^t\mathrm{ker}\nu(\partial_i).$
For every $i$, the minimal projective resolution
$0\to P_i\xrto{\partial_i}Q_i\to Y_i\to 0$
induces an isomorphism $\mathrm{ker}\nu(\partial_i)\cong \tau(Y_i)$
by \cite[Proposition IV.2.4(a)]{ASS}. Hence 
$$\mathrm{ker}\alpha\cong \bigoplus_{i=1}^t\tau(Y_i)=\tau(\bigoplus_{i=1}^tY_i).$$
Thus the second condition in Lemma~\ref{lem:general tau-rigid} becomes
$$\mathrm{Hom}_A(\bigoplus_{i=1}^t\tau(Y_i), \tau(\bigoplus_{j=1}^t\tau(Y_j)\oplus \nu(R)))=0.$$
Since $A$ is hereditary, every module has projective dimension at most one and injective dimension at most one. Moreover, each $Y_i$ is indecomposable and non-projective. Therefore, \cite[Corollary IV.2.15(b)]{ASS} yields 
$$\mathrm{Hom}_A(\tau(Y_i), \tau(\bigoplus_{j=1}^t\tau(Y_j)\oplus \nu(R)))\cong \mathrm{Hom}_A(Y_i, \bigoplus_{j=1}^t\tau(Y_j)\oplus \nu(R)).$$
Taking the direct sum over $i$, we obtain
$$\mathrm{Hom}_A(\bigoplus_{i=1}^t\tau(Y_i), \tau(\bigoplus_{j=1}^t\tau(Y_j)\oplus \nu(R)))=0\Longleftrightarrow \mathrm{Hom}_A(\bigoplus_{i=1}^tY_i, \bigoplus_{j=1}^t\tau(Y_j)\oplus \nu(R))=0.$$
This is condition \textup{(2)}. 

Finally, since $$\nu(X)=\nu(R)\quad \mbox{and}\quad \mathrm{ker}\alpha=\bigoplus_{i=1}^t\tau(Y_i),$$
 we may take $R_0, R_1$ from the minimal projective resolution of $\nu(R)$, and, by additivity, 
 $P_0=\bigoplus_{i=1}^tP_{0i}$ and $P_1=\bigoplus_{i=1}^tP_{1i}$
 for the projective resolution of $\mathrm{ker}\alpha$. Thus Lemma \ref{lem:general tau-rigid}(3) becomes
 $$\mathrm{Hom}_A(R_0\oplus \bigoplus_{i=1}^tP_{0i}, X_0\oplus R\oplus S\oplus \bigoplus_{i=1}^tP_i)=0$$
 and $$\mathrm{Hom}_A(R_1\oplus \bigoplus_{i=1}^tP_{1i}, \nu(S)\oplus \bigoplus_{i=1}^t\nu(Q_i))=0.$$
These are precisely the two equalities in condition \textup{(3)}. This proves part \textup{(a)}.

\medskip
\noindent $\textup{(b)}$ By Lemma~\ref{lem:ind of TA}, every basic projective $T(A)$-module is of the form $L(W)$, where $W$ is a basic projective $A$-module. Moreover, 
$|L(W)|=|W|.$
Thus $M$ is a basic support $\tau$-tilting $T(A)$-module if and only if there is some basic projective $A$-module $W$ such that $(M,L(W))$ is a basic support $\tau$-tilting pair.
Let $N=(E\oplus F, \left(\begin{smallmatrix} 0&0\\ \beta&0 \end{smallmatrix}\right))$ be a $T(A)$-module, where $\beta\colon \nu(E)\to F$. Then 
$$\mathrm{Hom}_{T(A)}(L(W),N)=\{\left(\begin{smallmatrix} u&0\\ v&\beta\circ \nu(u) \end{smallmatrix}\right) | u\in \mathrm{Hom}_A(W,E), v\in \mathrm{Hom}_A(W,F)\}.$$
Applying this description to the direct summands of $M$, we obtain
$$\mathrm{Hom}_{T(A)}(L(W),M)=0 \Longleftrightarrow \mathrm{Hom}_A(W,U(M))=0,$$
where $U(M)=X_0\oplus R\oplus S\oplus \nu(S)\oplus \bigoplus_{i=1}^t(P_i\oplus \nu(Q_i)).$
Write $W=\bigoplus_{j\in I}P_j$
for some $I\subseteq \{1,2,\cdots,|A|\}$. Then $$\mathrm{Hom}_A(W,U(M))=0\Longleftrightarrow I\subseteq J(M).$$
Equivalently, 
$$W\in \operatorname{add}W_M,\quad W_M=\bigoplus_{j\in J(M)}P_j.$$
In particular, $|W|\leq |W_M|=|J(M)|$
for every projective $W$ such that $$\mathrm{Hom}_{T(A)}(L(W),M)=0.$$

Suppose first that $M$ is a basic support $\tau$-tilting $T(A)$-module. Then there exists some basic projective $A$-module $W$ such that $(M,L(W))$ is a basic support $\tau$-tilting pair. Hence $M$ is $\tau$-rigid and 
$|M|+|W|=|A|.$
Since $W\in \operatorname{add}W_M$, we have 
$$|A|=|M|+|W|\leq |M|+|W_M|=|M|+|J(M)|.$$
On the other hand, $M$ is $\tau$-rigid and $\mathrm{Hom}_{T(A)}(L(W_M),M)=0$.
Therefore, $(M,L(W_M))$ is a basic $\tau$-rigid pair. The cardinality bound for basic $\tau$-rigid pairs gives 
$|M|+|J(M)|=|M|+|W_M|\leq |A|.$
Combining the two inequalities yields $|M|+|J(M)|=|A|.$

Conversely, suppose that $M$ is $\tau$-rigid and $|M|+|J(M)|=|A|$. By the definition of $J(M)$, 
$\mathrm{Hom}_A(W_M, U(M))=0,$
and hence $\mathrm{Hom}_{T(A)}(L(W_M),M)=0.$
Thus $(M,L(W_M))$ is a basic $\tau$-rigid pair. Moreover, 
$$|M|+|L(W_M)|=|M|+|W_M|=|M|+|J(M)|=|A|.$$
It follows that $(M,L(W_M))$ is a basic support $\tau$-tilting pair. Consequently, $M$ is a basic support $\tau$-tilting module and $(M,L(W_M))$ is the associated basic support $\tau$-tilting pair.
\end{proof}

\section{Applications to the linearly oriented quiver of type $A_n$}

In this section, we use Lemma \ref{lem:ind of tau-rigid} and Theorem \ref{thm:main result} to determine the basic $\tau$-rigid modules of the trivial extension of the linear quiver $A_n$.

\subsection{The compatibility graph of basic $\tau$-rigid modules}  Let $B$ be a finite-dimensional $\Bbbk$-algebra. Let $G_B$ be the graph whose vertices are the indecomposable $\tau$-rigid $B$-modules, with an edge
$$U\mathrel{\relbar} V$$
exactly when $U\oplus V$ is $\tau$-rigid.
 
 Because $$\mathrm{Hom}_{B}(\bigoplus_{i=}^rM_i, \tau(\bigoplus_{j=1}^sM_j))\cong \bigoplus_{i=1}^r\bigoplus_{j=1}^s\mathrm{Hom}_{B}(M_i,\tau(M_j)),$$
we have
$$\oplus_{i=1}^rM_i \in \tau\mbox{-}\mathsf{rigid}~B\Longleftrightarrow M_i\oplus M_j\in \tau\mbox{-}\mathsf{rigid}~B \ \mbox{for every} \ i\neq j.$$
Therefore
$$\mbox{basic rank-}r\ \tau \mbox{-rigid modules of}\ B\longleftrightarrow r\mbox{-cliques of}\ G_B.$$

 To avoid repetitions, we fix an ordering of the vertices and only add a vertex larger than all previously chosen vertices. After Theorem~\ref{thm:main result} gives the compatibility graph $G_B$, it is very convenient to determine the lower rank basic $\tau$-rigid $B$:
 \begin{enumerate}
   \item For each edge $\{U,V\}$, compute $N(U)\cap N(V),$
   where $N(U)$ is the set of larger compatible neighbors of $U$. Every $W\in N(U)\cap N(V)$ gives a triangle $\{U,V,W\}$.
   \item For each triangle ${U,V,W}$, compute
$N(U)\cap N(V)\cap N(W).$
Every vertex in this intersection gives a $4$-clique.
   \item Continue by intersecting the common neighborhoods of the vertices already chosen.
 \end{enumerate} 

The compatibility graph $G_B$ is precisely the
$1$-skeleton of the so-called \emph{$\tau$-rigid complex} of $B$ defined as follows \cite{DIJ}.
\begin{defn}
Let $B$ be a finite-dimensional basic $\Bbbk$-algebra. The
\emph{$\tau$-rigid complex} of $B$, denoted by
$\Delta_{\tau}(B)$, is the abstract simplicial complex defined as
follows. Its vertices are the isomorphism classes of indecomposable
$\tau$-rigid $B$-modules. A finite set
$$
{M_1,\ldots,M_r}
$$
of pairwise non-isomorphic indecomposable $\tau$-rigid
$B$-modules is a face of $\Delta_{\tau}(B)$ if and only if
$
M_1\oplus\cdots\oplus M_r
$
is a basic $\tau$-rigid $B$-module.
\end{defn}
Since 
$
M_1\oplus\cdots\oplus M_r
$
is $\tau$-rigid if and only if
$
M_i\oplus M_j
$
is $\tau$-rigid for every $i\neq j$. Hence the faces of
$\Delta_{\tau}(B)$ are exactly the cliques of $G_B$.
In particular, the $(r-1)$-dimensional faces of
$\Delta_{\tau}(B)$ are in bijection with the basic rank-$r$
$\tau$-rigid $B$-modules.

The complex $\Delta_{\tau}(B)$ is the module-vertex induced subcomplex of the \emph{support $\tau$-tilting complex} $\Delta_{\mathrm{s}\tau}(B)$ defined by Palu-Pilaud-Plamondon in \cite[Definition 1.8]{PPP}. Recall that $\Delta_{\mathrm{s}\tau}(B)$ is the abstract simplicial complex
whose vertices are of the following two types:
$$
(M,0),
\
M\in\operatorname{ind}\tau\text{-rigid}B, \qquad \text{and} \qquad
(0,P),
\
P\in\operatorname{ind}B\text{-proj}.
$$
Equivalently, one may denote the latter vertex by $P[1]$. A finite collection
$$
{M_1,\ldots,M_r,P_1[1],\ldots,P_s[1]}
$$
is a face of $\Delta_{\mathrm{s}\tau}(B)$ if and only if
$$
\left(
\bigoplus_{i=1}^{r}M_i,\bigoplus_{j=1}^{s}P_j
\right)
$$
is a basic $\tau$-rigid pair.

If $B$ has $n$ isomorphism classes of simple modules, then the
facets of $\Delta_{\mathrm{s}\tau}(B)$ are precisely the basic
support $\tau$-tilting pairs. Consequently, every facet contains $n$ vertices, and
$
\dim\Delta_{\mathrm{s}\tau}(B)=n-1.
$

The notation $L_i[1]$ distinguishes the projective-complement
vertex from the module vertex $L_i$, which itself is an
indecomposable $\tau$-rigid $B$-module.

Similar to the $\tau$-rigid compatibility graph, to avoid repetitions, we also fix an ordering of the vertices of $G_B^{\rm s\tau}$ and only add a vertex larger than all previously chosen vertices. The choice of an ordering on the vertices is not intrinsic to the compatibility graph or to the associated simplicial complex. We fix a total ordering only for computational purposes, so that each clique is constructed exactly once.

\subsection{The indecomposable $\tau$-rigid  $T(\Bbbk A_n)$-modules} We first give a general result for the trivial extension of the linear quiver $A_n$.
Let
$$
A_n:1\longrightarrow2\longrightarrow\cdots\longrightarrow n.
$$
For $1\leq a\leq b\leq n$, denote by
$
[a,b]
$
the indecomposable interval $\Bbbk A_n$-module supported on the vertices
$a,a+1,\ldots,b$. In particular,
$$S_a=[a,a],\qquad
P_a=[a,n]
\qquad\text{and}\qquad
I_b=[1,b].
$$

For interval modules, one has
\begin{equation}\label{eq:interval hom}
\mathrm{Hom}_{\Bbbk A_n}([a,b],[c,d])\neq0
\quad\Longleftrightarrow\quad
c\leq a\leq d\leq b.
\end{equation}

Moreover, if $b<n$, then
$
\tau[a,b]=[a+1,b+1],
$
and the minimal projective resolution of $[a,b]$ is
\begin{equation*}\label{eq:minimal proj resolu}
0\longrightarrow P_{b+1}
\longrightarrow P_a
\longrightarrow[a,b]\longrightarrow0.
\end{equation*}
In particular, every non-projective indecomposable $\Bbbk A_n$-module
$[a,b]$, with $b<n$, is $\tau$-rigid.

We determine the indecomposable $\tau$-rigid
$T(\Bbbk A_n)$-modules occurring in Lemma \ref{lem:ind of tau-rigid}.

First, every non-projective indecomposable interval module gives an
indecomposable $\tau$-rigid module
$
Z([a,b]),
\qquad
1\leq a\leq b<n.$

Next, for $R=P_r$, we have
$
\nu(P_r)=I_r=[1,r].
$
The projective cover of $I_r$ is $P_1\to I_r$. Hence the
condition for $Z(P_r)$ to be $\tau$-rigid is
$
\mathrm{Hom}_{\Bbbk A_n}(P_1,P_r)=0.
$
By \eqref{eq:interval hom}, this holds precisely for $2\leq r\leq n$.
Thus the modules of this type are
$
Z(P_r),
\qquad
2\leq r\leq n.
$

Every module
$
L(P_r),
\qquad
1\leq r\leq n,
$
is projective over $T(\Bbbk A_n)$, and therefore is $\tau$-rigid.

For $(
P_{b+1}\oplus I_a,
\left(\begin{smallmatrix}0&0\\
\nu(\partial)&0\end{smallmatrix}\right)
)
$, since
$
\tau(Y)=[a+1,b+1],
$
the projective cover of $\tau(Y)$ is $P_{a+1}$. The relevant
$\mathrm{Hom}$-vanishing condition becomes
$
\mathrm{Hom}_{\Bbbk A_n}(P_{a+1},P_{b+1})=0,
$
which, by \eqref{eq:interval hom}, is equivalent to $a<b$. The other $\mathrm{Hom}$-vanishing condition is automatic, since
$
\mathrm{Hom}_{\Bbbk A_n}(P_{b+2},I_a)=0
$
whenever $b\leq n-2$. Consequently, the indecomposable $\tau$-rigid minimal syzygy modules
are
$
\Omega Z([a,b]),
\
1\leq a<b<n.
$
Thus the indecomposable $\tau$-rigid $T(A)$-modules are
\begin{align*}
X_{a,b}&:=Z([a,b]), \ 1\leq a\leq b<n,\\
R_r&:=Z(P_r),\ 2\leq r\leq n,\\
L_s&:=L(P_s), \ 1\leq s\leq n,\\
\Omega_{a,b}&:=\Omega Z([a,b]),\ 1\leq a<b<n.
\end{align*}
Their number is
$$
\frac{n(n-1)}2+(n-1)+n+\frac{(n-1)(n-2)}2=n^2.
$$

Let $M$ be a basic $T(\Bbbk A_n)$-module of the form
$$M=\bigoplus_{[a,b]\in \mathcal{X}}X_{a,b}\oplus \bigoplus_{r\in \mathcal{R}}R_r\oplus \bigoplus_{s\in \mathcal{S}}L_s\oplus \bigoplus_{[a,b]\in \mathcal{Y}}\Omega_{a,b},$$
where the indexing sets contain no repetitions. Using (\ref{eq:interval hom}), Theorem~\ref{thm:main result}(a) says that $M$ is $\tau$-rigid exactly when the following interval $\mathrm{Hom}$-spaces vanish:
\begin{align*}&\mathrm{Hom}_{\Bbbk A_n}(\bigoplus_{[a,b]\in \mathcal{X}}[a,b]\oplus \bigoplus_{r\in \mathcal{R}}P_r\oplus \bigoplus_{s\in \mathcal{S}}P_s\oplus \bigoplus_{[a,b]\in \mathcal{Y}}P_{b+1}, \bigoplus_{[c,d]\in \mathcal{X}}[c+1,d+1])=0,\\
&\mathrm{Hom}_{\Bbbk A_n}(\bigoplus_{[a,b]\in \mathcal{Y}}[a,b], \bigoplus_{[c,d]\in \mathcal{Y}}[c+1,d+1]\oplus \bigoplus_{r\in \mathcal{R}}I_r)=0\\
&\mathrm{Hom}_{\Bbbk A_n}(P_1^{|\mathcal{R}|}\oplus \bigoplus_{[a,b]\in \mathcal{Y}}P_{a+1}, \bigoplus_{[c,d]\in \mathcal{X}}[c,d]\oplus \bigoplus_{r\in \mathcal{R}}P_r\oplus \bigoplus_{s\in \mathcal{S}}P_s\oplus \bigoplus_{[a,b]\in \mathcal{Y}}P_{b+1})=0\\
&\mathrm{Hom}_{\Bbbk A_n}(\bigoplus_{r\in \mathcal{R}}P_{r+1}\oplus \bigoplus_{[a,b]\in \mathcal{Y}}P_{b+2}, \bigoplus_{s\in \mathcal{S}}I_s\oplus \bigoplus_{[a,b]\in \mathcal{Y}}I_a)=0.
\end{align*}

 Using (\ref{eq:interval hom}), one has  
$$\operatorname{supp}U(X_{a,b})=[a,b], \ \operatorname{supp}U(R_r)=[r,n], \ \operatorname{supp}U(L_s)=[1,n]$$ and $$ \operatorname{supp}U(\Omega_{a,b})=[1,a]\cup [b+1,n].$$
Therefore, we have
$$\operatorname{supp}U(M)=\bigcup_{[a,b]\in \mathcal{X}}[a,b]\cup \bigcup_{r\in \mathcal{R}}[r,n]\cup \bigcup_{s\in \mathcal{S}}[1,n]\cup \bigcup_{[c,d]\in \mathcal{Y}}([1,c]\cup [d+1,n]).$$
A useful immediate consequence is:
$$\mathcal{S}\neq\varnothing\Longrightarrow J(M)=\varnothing.$$
Therefore, any support $\tau$-tilting module containing a summand $L(P_s)$ must have $|M|=n$, so it is actually a $\tau$-tilting module. Later we will see that the converse is also true, i.e., every basic $\tau$-tilting $T(\Bbbk A_n)$-module contains
a direct summand of the form $L(P_i)$.

Another useful consequence is:
\begin{align*} (X_{a,b},L(P_j))\text{ is a $\tau$-rigid pair} &\Longleftrightarrow j<a\text{ or }j>b,\\
(R_r,L(P_j))\text{ is a $\tau$-rigid pair} &\Longleftrightarrow j<r,\\
 (\Omega_{a,b},L(P_j))\text{ is a $\tau$-rigid pair} &\Longleftrightarrow
a<j\leq b,
\end{align*}
and $(L_s,L(P_j))$ is never a $\tau$-rigid pair.

\subsection{The rank-two basic $\tau$-rigid $T(\Bbbk A_n)$-modules} From the $\mathrm{Hom}$-vanishing conditions of Theorem \ref{thm:main result}, we can determine the rank-two basic $\tau$-rigid $T(\Bbbk A_n)$-modules. 

For two modules of type $X_{a,b}$. The module $X_{a,b}\oplus X_{c,d}=Z([a,b])\oplus Z([c,d])$ with $a\leq b<n, c\leq d<n$ is $\tau$-rigid if and only if 
$[a,b]\oplus [c,d]$
is a $\tau$-rigid $\Bbbk A_n$-module. Equivalently,
$$\mathrm{Hom}_{\Bbbk A_n}([a,b],\tau[c,d])=0\quad \text{and} \quad \mathrm{Hom}_{\Bbbk A_n}([c,d],\tau[a,b])=0.$$
Now $\tau[c,d]=[c+1,d+1]$ shows that 
$$\mathrm{Hom}_{\Bbbk A_n}([a,b],\tau[c,d])\neq 0\Longleftrightarrow c+1\leq a\leq d+1\leq b.$$
Similarly, $$\mathrm{Hom}_{\Bbbk A_n}([c,d],\tau[a,b])\neq 0\Longleftrightarrow a+1\leq c\leq b+1\leq d.$$
Interchange the two summands, when necessary, so that 
$$a<c, \quad \text{or}\quad a=c \quad \text{and} \quad b>d.$$
With this convention, $c+1\leq a$ is impossible, so the first $\mathrm{Hom}$-space always vanishes. Thus $X_{a,b}\oplus X_{c,d}$ is $\tau$-rigid if and only if 
$a+1\leq c\leq b+1\leq d$
does not hold. This is equivalent to 
$$a\leq c\leq d\leq b\quad \text{or} \quad b+2\leq c\leq d.$$
Notice that $b,d<n$, so for an interval $[a,b]$, write $\ell=b-a+1$, there are $n-\ell$ intervals with length $\ell$, and each contains 
$\frac{\ell(\ell+1)}{2}-1$
proper subintervals. Hence the number of the pairs $[c,d]\subsetneq [a,b]$ is 
$$\sum_{\ell=1}^{n-1}(n-\ell)(\frac{\ell(\ell+1)}{2}-1)=\frac{n(n-1)(n-2)(n+5)}{24}.$$
On the other hand, fix the right endpoint $b$ of the left interval. There are $b$ choices for its left endpoint $a$. The number of intervals $[c,d]$ satisfying $c\geq b+2$ is
$$\sum_{c=b+2}^{n-1}(n-c)=\binom{n-b-1}{2}.$$
Therefore there are $$\sum_{b=1}^{n-2}b\binom{n-b-1}{2}=\frac{n(n-1)(n-2)(n-3)}{24}$$
choices of the intervals $[a,b], [c,d]$ with $c\geq b+2$. Consequently, the number of the modules $X_{a,b}\oplus X_{c,d}$ up to isomorphism is
$$\frac{(n+5)n(n-1)(n-2)}{24}+\frac{n(n-1)(n-2)(n-3)}{24}=\frac{n(n-1)(n-2)(n+1)}{12}.$$

For the module of the form $R_r\oplus L_s=Z(P_r)\oplus L(P_s)$, the minimal projective resolution of $\nu(P_r)=I_r$ is
$0\to P_{r+1}\to P_1\to I_r\to 0$
with $P_{n+1}=0$. Condition (3) of Theorem~\ref{thm:main result} becomes
$$\mathrm{Hom}_{\Bbbk A_n}(P_1,P_s)=0\quad \mbox{and}\quad \mathrm{Hom}_{\Bbbk A_n}(P_{r+1},I_s)=0.$$
The first condition is equivalent to $s\geq 2$, while the second one is equivalent to $s\leq r$. Thus $R_r\oplus L_s$ is $\tau$-rigid if and only if $2\leq s\leq r$. This gives 
$$\sum_{r=2}^n(r-1)=\frac{n(n-1)}{2}$$
such modules.

Similarly, we can deduce that every basic rank-two $\tau$-rigid $T(\Bbbk A_n)$-module belongs to one of the following ten types.
\begin{tiny}\begin{table}[H]
    \centering
    \caption{rank-two basic $\tau$-rigid $T(\Bbbk A_n)$-modules}\label{tab:ranktwo}
      \renewcommand{\arraystretch}{1.2}
      \setlength{\extrarowheight}{1pt}
    \begin{tabularx}{0.9\textwidth}{>{\centering\arraybackslash}X|>{\centering\arraybackslash}X|c}
        pair & compatibility condition & number \\
        \hline
        $X_{a,b}\oplus X_{c,d}
    (a<c, \text{or}\ a=c\ \text{and} \ b>d)$ & $[c,d]\subsetneq[a,b]$, or $c\geq b+2$        & $\frac{n(n-1)(n-2)(n+1)}{12}$       \\
       \hline $X_{a,b}\oplus R_r$ & $a\geq 2$ and ($r\geq b+2\ \text{or}\ r\leq a)$ & $\frac{(n-1)(n-2)(2n-3)}{6}$ \\
        \hline $X_{a,b}\oplus L_s$ & $s\leq a$ or $s\geq b+2$ & $\frac{n(n-1)(2n-1)}{6}$ \\
         \hline $X_{a,b}\oplus \Omega_{c,d}$ & $b\leq c$, or $a\geq d+1$, or $c+2\leq a\leq b\leq d-1$ & $\frac{(n-1)(n-2)(n^2-3n+4)}{8}$ \\
         \hline $R_r\oplus R_s$ & $r\neq s$ & $\binom{n-1}{2}$ \\
         \hline $R_r\oplus L_s$ & $2\leq s \leq r\leq n$ & $\frac{n(n-1)}{2}$ \\
         \hline $R_r\oplus \Omega_{c,d}$ & $r\geq d+1$ & $\frac{n(n-1)(n-2)}{6}$ \\
         \hline $L_r\oplus L_s$ & $r\neq s$ & $\binom{n}{2}$\\
         \hline $L_s\oplus \Omega_{c,d}$ & $c+2\leq s \leq d+1$ & $\frac{n(n-1)(n-2)}{6}$\\
          \hline $\Omega_{a,b}\oplus \Omega_{c,d}
    (a<c, \mbox{or}\ a=c\ \text{and} \ b>d)$ & $[c,d]\subsetneq [a,b]$ & $\frac{(n-1)(n-2)(n-3)(n+4)}{24}$\\
    \hline
    \end{tabularx}
\end{table}
\end{tiny}
Since the $\tau$-rigid complex is flag, the edge information then determines all higher-rank $\tau$-rigid modules as cliques. So classifying rank-two $\tau$-rigid modules is not merely the first nontrivial case, it determines the whole simplicial complex.

By adding the ten contributions, we obtain a formula for the number of basic rank-two $\tau$-rigid $T(\Bbbk A_n)$-modules $\frac{n^2(n^2-1)}{4}=\binom{n}{2}\binom{n+1}{2}.$

In subsection 4.2, we have determined $\tau$-rigid pairs $(M,L(P))$ satisfying $|M|=|P|=1$. Combining with Table~\ref{tab:ranktwo}, we see that the basic $\tau$-rigid pairs $(M,L(P))$ satisfying $|M|+|P|=2$ are precisely the pairs belonging to the following three classes:

\medskip \noindent \textup{(I)} Pairs of the form $(M,0)$, where $M$ is one of the rank-two basic $\tau$-rigid modules listed in Table~\ref{tab:ranktwo}.

\medskip \noindent \textup{(II)} Pairs of the form $(N,L(P_j))$, where $N$ is indecomposable and one of the following conditions holds:
\[
\begin{array}{c|c}
    N & \text{condition on } j \\ \hline
    X_{a,b} & j<a \text{ or } j>b \\
    R_r & j<r \\
     \Omega_{a,b} & a<j\leq b.
\end{array}
\]

\medskip \noindent \textup{(III)} Pairs of the form $(0,L(P_i\oplus P_j)), 1\leq i<j\leq n.$

Therefore, we can deduce that the formula for the number of basic $\tau$-rigid $T(\Bbbk A_n)$-pairs satisfying $|M|+|P|=2$
$$\binom{n}{2}\binom{n+1}{2} +\frac{n^2(n-1)}{2}+\binom{n}{2}=\binom{n}{2}\binom{n+2}{2},$$
which agrees exactly with the known $f_1$-entry for the two-term pretilting complex of a Brauer tree algebra in \cite{AMN}. Recall that, a basic $\tau$-rigid pair of $T(\Bbbk A_n)$ satisfying $|M|+|P|=2$ corresponds to a basic two-term pretilting complex with two indecomposable summands \cite{AIR} and $T(\Bbbk A_n)$ is a Brauer tree algebra \cite{WZ}. 

\begin{exm} Take $B=T(\Bbbk A_3)$, then $G_B$ has nine vertices:
$$X_{1,1}< X_{2,2}< X_{1,2}<R_2<R_3< L_1< L_2< L_3< \Omega_{1,2}.$$
By table \ref{tab:ranktwo}, the neighbors of each vertex are:
\begin{tiny}\begin{table}[H]
    \centering
    \caption{Edges of $G_{T(\Bbbk A_3)}$}\label{tab:mmedge}
      \renewcommand{\arraystretch}{1.2}
      \setlength{\extrarowheight}{-.1pt}
    \begin{tabular}{c|c}
        vertex & compatible vertices  \\
        \hline
        $X_{1,1}$ & $X_{1,2}, L_1, L_3, \Omega_{1,2}$        \\
        $X_{2,2}$ & $X_{1,2}, R_2, L_1, L_2$ \\
         $X_{1,2}$ & $L_1$  \\
         $R_2$ & $R_3,L_2$  \\
         $R_3$ & $L_2,L_3,\Omega_{1,2}$  \\
         $L_1$ & $L_2,L_3$  \\
         $L2$ & $L_3$  \\
         $L_3$ & $\Omega_{1,2}$  \\
         $\Omega_{1,2}$ & $\varnothing $  \\
    \end{tabular}
\end{table}
\end{tiny}
Thus $T(\Bbbk A_3)$ has $18$ rank-two basic $\tau$-rigid modules up to isomorphism.  There are $10$ cliques of rank-three in $G_{T(\Bbbk A_3)}$ and thus $T(\Bbbk A_3)$ has $10$ rank-three basic $\tau$-rigid modules up to isomorphism: the nonempty intersections of compatible vertices are as follows:
\begin{tiny}\begin{table}[H]
    \centering
    \caption{Triangles of $G_{T(\Bbbk A_3)}$}\label{tab:G_Btriangles}
      \renewcommand{\arraystretch}{1.2}
      \setlength{\extrarowheight}{-.1pt}
    \begin{tabular}{c|c}
        nonempty intersection & corresponding basic $\tau$-rigid modules  \\
        \hline
        $N(X_{1,1})\cap N(X_{1,2})=\{L_1\}$ & $Z(S_1)\oplus Z(I_2)\oplus L(P_1)$        \\
        $N(X_{1,1})\cap N(L_1)=\{L_3\}$ & $Z(S_1)\oplus L(P_1)\oplus L(P_3)$ \\
         $N(X_{1,1})\cap N(L_3)=\{\Omega_{1,2}\}$ & $Z(S_1)\oplus L(P_3)\oplus \Omega Z(I_2)$  \\
         $N(X_{2,2})\cap N(X_{1,2})=\{L_1\}$ & $Z(S_2)\oplus Z(I_2)\oplus L(P_1)$  \\
         $N(X_{2,2})\cap N(R_2)=\{L_2\}$ & $Z(S_2)\oplus Z(P_2)\oplus L(P_2)$  \\
         $N(X_{2,2})\cap N(L_1)=\{L_2\}$ & $Z(S_2)\oplus L(P_1)\oplus L(P_2)$  \\
         $N(R_2)\cap N(R_3)=\{L_2\}$ & $Z(P_2)\oplus Z(P_3)\oplus L(P_2)$  \\
         $N(R_3)\cap N(L_2)=\{L_3\}$ & $Z(P_3)\oplus L(P_2)\oplus L(P_3)$  \\
         $N(R_3)\cap N(L_3)=\{\Omega_{1,2}\}$ & $Z(P_3)\oplus L(P_3)\oplus \Omega Z(I_2)$  \\
         $N(L_1)\cap N(L_2)=\{L_3\}$ & $L(P_1)\oplus L(P_2)\oplus L(P_3)$
    \end{tabular}
\end{table}
\end{tiny}
Since $T(\Bbbk A_3)$ has three simple modules, a basic $\tau$-rigid module has at most three indecomposable direct summands. Consequently, $G_{T(\Bbbk A_3)}$ has no clique of size four, and thus $T(\Bbbk A_3)$ has $1+9+18+10=28$
basic $\tau$-rigid modules up to isomorphism. 

For constructing the support $\tau$-tilting pairs of $T(\Bbbk A_3)$, we define the
\emph{support $\tau$-tilting compatibility graph} $G_B^{\rm s\tau}$ to be the $1$-skeleton of the support $\tau$-tilting complex $\Delta_{\mathrm{s}\tau}(B)$. 
In the case $B=T(\Bbbk A_3)$, the vertex set of $G_B^{\rm s\tau}$ is
$$X_{1,1}< X_{2,2}< X_{1,2}< R_2<R_3<L_1< L_2<L_3<\Omega_{1,2}< L_1[1]<L_2[1]< L_3[1].$$
For a module vertex $M$, Theorem~\ref{thm:main result}(b) gives
$$M\mathrel{\relbar} L_j[1]\Longleftrightarrow\mathrm{Hom}_{T(\Bbbk A_3)}(L(P_j),M)=0\Longleftrightarrow j\in J(M).$$
Combining this with Table~\ref{tab:mmedge}, we obtain
\begin{tiny}\[
\begin{array}{c|c|c|c}
    M & \operatorname{supp}U(M)& \text{adjacent shifted vertices}&\text{compatible module vertices}\\
     \hline
    X_{1,1} & \{1\} & L_2[1], L_3[1] & X_{1,2}, L_1, L_3, \Omega_{1,2} \\
    X_{2,2} & \{2\} & L_1[1], L_3[1]& X_{1,2}, R_2, L_1, L_2\\
     X_{1,2} & \{1,2\}& L_3[1]& L_1\\
     R_2&\{2,3\}& L_1[1]& R_3,L_2\\
     R_3&\{3\} & L_1[1],L_2[1]& L_2,L_3,\Omega_{1,2}\\
     L_1 &\{1,2,3\} & \varnothing& L_2,L_3\\
      L_2 &\{1,2,3\} & \varnothing& L_3\\
       L_3 &\{1,2,3\} & \varnothing& \Omega_{1,2}\\
        \Omega_{1,2} &\{1,3\} & L_2[1]& \varnothing
\end{array}
\]\end{tiny}
In addition $$L_1[1]\mathrel{\relbar} L_2[1], \quad L_1[1]\mathrel{\relbar} L_3[1], \quad L_2[1]\mathrel{\relbar} L_3[1]$$
are all edges. Therefore $G_{T(\Bbbk A_3)}^{\rm s\tau}$ has $30$ edges.

Since $T(\Bbbk A_3)$ has three simple modules, the basic support $\tau$-tilting pairs are exactly the triangles of the extended compatibility graph $G_{T(\Bbbk A_3)}^{\rm s\tau}$ listed as follows:
\begin{tiny}\begin{table}[H]
    \centering
      \renewcommand{\arraystretch}{1.2}
      \setlength{\extrarowheight}{-.1pt}
    \begin{tabular}{c|c}
        nonempty intersection & corresponding basic support $\tau$-tilting pairs  \\
        \hline
        $N(X_{1,1})\cap N(X_{1,2})=\{L_1, L_3[1]\}$ & $(Z(S_1)\oplus Z(I_2)\oplus L(P_1),0), (Z(S_1)\oplus Z(I_2), L(P_3))$        \\
        $N(X_{1,1})\cap N(L_1)=\{L_3\}$ & $(Z(S_1)\oplus L(P_1)\oplus L(P_3),0)$ \\
         $N(X_{1,1})\cap N(L_3)=\{\Omega_{1,2}\}$ & $(Z(S_1)\oplus L(P_3)\oplus \Omega Z(I_2),0)$  \\
          $N(X_{1,1})\cap N(\Omega_{1,2})=\{L_2[1]\}$ & $((Z(S_1)\oplus \Omega Z(I_2),L(P_2))$  \\
           $N(X_{1,1})\cap N(L_2[1])=\{L_3[1]\}$ & $(Z(S_1),L(P_2)\oplus L(P_3))$  \\
         $N(X_{2,2})\cap N(X_{1,2})=\{L_1,L_3[1]\}$ & $(Z(S_2)\oplus Z(I_2)\oplus L(P_1),0), (Z(S_2)\oplus Z(I_2), L(P_3))$  \\
         $N(X_{2,2})\cap N(R_2)=\{L_2,L_1[1]\}$ & $(Z(S_2)\oplus Z(P_2)\oplus L(P_2),0), (Z(S_2)\oplus Z(P_2), L(P_1))$  \\
         $N(X_{2,2})\cap N(L_1)=\{L_2\}$ & $(Z(S_2)\oplus L(P_1)\oplus L(P_2),0)$  \\
         $N(X_{2,2})\cap N(L_1[1])=\{L_3[1]\}$ & $(Z(S_2), L(P_1)\oplus L(P_3))$  \\
         $N(R_2)\cap N(R_3)=\{L_2,L_1[1]\}$ & $(Z(P_2)\oplus Z(P_3)\oplus L(P_2),0), (Z(P_2)\oplus Z(P_3), L(P_1))$  \\
         $N(R_3)\cap N(L_2)=\{L_3\}$ & $(Z(P_3)\oplus L(P_2)\oplus L(P_3),0)$  \\
         $N(R_3)\cap N(L_3)=\{\Omega_{1,2}\}$ & $(Z(P_3)\oplus L(P_3)\oplus \Omega Z(I_2),0)$  \\
         $N(R_3)\cap N(\Omega_{1,2})=\{L_2[1]\}$ & $(Z(P_3)\oplus \Omega Z(I_2),L(P_2))$  \\
         $N(R_3)\cap N(L_1[1])=\{L_2[1]\}$ & $(Z(P_3),L(P_1)\oplus L(P_2))$  \\
         $N(L_1)\cap N(L_2)=\{L_3\}$ & $(L(P_1)\oplus L(P_2)\oplus L(P_3),0)$\\
         $N(L_1[1])\cap N(L_2[1])=\{L_3[1]\}$ & $(0,L(P_1)\oplus L(P_2)\oplus L(P_3))$
    \end{tabular}
\end{table}
\end{tiny}
Therefore, there are $20$ basic support $\tau$-tilting pairs in $T(\Bbbk A_3)\text{-}\mathrm{mod}$.
\end{exm}

\subsection{Projective-free basic $\tau$-rigid $T(\Bbbk A_n)$-modules} 
The following result does not hold for arbitrary finite-dimensional algebras: there exist basic $\tau$-tilting modules having no projective direct summands \cite{XieZhang}. Thus the corollary reflects a special feature of the trivial extension of the linearly oriented quiver of type $A_n$.

\begin{cor}\label{cor:without projective summands}
Let $M$ be a basic $\tau$-rigid $T(\Bbbk A_n)$-module. If
$
L(P_i)\notin\operatorname{add}M$ for every $
1\leq i\leq n,
$
then
$
|M|\leq n-1.
$
Consequently, every basic $\tau$-tilting $T(\Bbbk A_n)$-module contains
a direct summand of the form $L(P_i)$.
\end{cor}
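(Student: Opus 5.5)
The plan is to reduce the bound to Theorem~\ref{thm:main result} by exhibiting one extra vertex that can be adjoined to $M$. Concretely, we produce either an index $s$ such that $M\oplus L_s$ is $\tau$-rigid, or an index $j$ such that $(M,L(P_j))$ is a $\tau$-rigid pair. In both cases the cardinality bound for basic $\tau$-rigid pairs over $T(\Bbbk A_n)$ (which has $n$ simples) gives $|M|+1\leq n$.

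Since $L(P_i)\notin\operatorname{add}M$, every indecomposable summand of $M$ is of type $X_{a,b}$, $R_r$ or $\Omega_{c,d}$. Because $\tau$-rigidity of a sum is checked pairwise, we only need the edge data already recorded. From Table~\ref{tab:ranktwo}:
\begin{itemize}
\item $X_{a,b}\oplus L_s$ is $\tau$-rigid iff $s\leq a$ or $s\geq b+2$;
\item $R_r\oplus L_s$ is $\tau$-rigid iff $2\leq s\leq r$;
\item $\Omega_{c,d}\oplus L_s$ is $\tau$-rigid iff $c+2\leq s\leq d+1$.
\end{itemize}
From the list following the computation of $\operatorname{supp}U(M)$:
\begin{itemize}
\item $(X_{a,b},L(P_j))$ is a $\tau$-rigid pair iff $j<a$ or $j>b$;
\item $(R_r,L(P_j))$ is a $\tau$-rigid pair iff $j<r$;
\item $(\Omega_{a,b},L(P_j))$ is a $\tau$-rigid pair iff $a<j\leq b$.
\end{itemize}

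\emph{Case 1: $M$ has no summand of type $\Omega$.} If $M$ has a summand $R_r$, its other summands $X_{a,b}$ satisfy $a\geq 2$ by the $X\oplus R$ row. Also $r\geq 2$ always. Taking $j=1$ then satisfies $j<a$ for every $X_{a,b}$ and $j<r$ for every $R_r$, so $(M,L(P_1))$ is a $\tau$-rigid pair. If $M$ consists only of summands $X_{a,b}$, take $s=1$: since $1\leq a$ always holds, $M\oplus L_1$ is $\tau$-rigid. Either way $|M|\leq n-1$.

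\emph{Case 2: $M$ has at least one summand of type $\Omega$.} By the $\Omega\oplus\Omega$ row, the intervals indexing the $\Omega$-summands are pairwise strictly nested, so they form a chain. Let $\Omega_{c,d}$ be the innermost one and set $j=d$. We check each summand in turn.
\begin{itemize}
\item Another $\Omega_{a',b'}$ has $[c,d]\subsetneq[a',b']$, so $a'\leq c<d\leq b'$, giving $a'<j\leq b'$.
\item A summand $R_r$ must satisfy $r\geq d+1$ by the $R\oplus\Omega$ row, hence $j<r$.
\item A summand $X_{a,b}$ compatible with $\Omega_{c,d}$ satisfies $b\leq c$, or $a\geq d+1$, or $c+2\leq a\leq b\leq d-1$. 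In each subcase $j=d$ lies outside $[a,b]$, i.e.\ $j>b$ or $j<a$.
\item $\Omega_{c,d}$ itself satisfies $c<d\leq d$.
\end{itemize}
So $(M,L(P_d))$ is a $\tau$-rigid pair and $|M|\leq n-1$. The consequence for $\tau$-tilting modules is immediate, since these have $n$ summands.

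The main point to get right is Case 2: choosing the innermost $\Omega$ and $j=d$, and verifying the three $X$-subcases against the endpoint $d$. The rest is bookkeeping with the table.
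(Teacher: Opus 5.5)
Your proof is correct and follows essentially the same strategy as the paper. Both split according to whether $\Omega$- and $R$-summands occur, use the fact that the $\Omega$-intervals form a chain, and exhibit one extra compatible vertex so that the bound $|M|+1\le n$ applies.

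The differences are cosmetic. For the innermost $[c,d]$ you take $j=d$ where the paper takes $j=c+1$; both endpoints work for the same reason. In the pure-$X$ case you adjoin the module summand $L_1$, whereas the paper pairs $M$ with $L(P_n)$, since no interval $[a,b]$ with $b<n$ contains $n$.
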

\begin{proof}
Write
$$
M=
\bigoplus_{[a,b]\in\mathcal X}Z([a,b])
\oplus
\bigoplus_{r\in\mathcal R}Z(P_r)
\oplus
\bigoplus_{[c,d]\in\mathcal Y}\Omega Z([c,d]),
$$
where
$
1\leq a\leq b<n,\qquad
2\leq r\leq n,\qquad
1\leq c<d<n.
$
There is no summand of the form $L(P_i)$ by assumption. We prove that
$$
\operatorname{supp}U(M)\neq{1,\ldots,n}.
$$
Equivalently, there exists $j$ such that
$
\mathrm{Hom}_{\Bbbk A_n}(P_j,U(M))=0.
$

We distinguish three cases.

\medskip
\noindent
Case 1: $\mathcal{Y}\neq\varnothing$.

The compatibility condition for two minimal syzygies implies that
the intervals indexing the elements of $\mathcal{Y}$ are totally
ordered by strict inclusion. Choose an interval $
[c,d]\in\mathcal{Y}
$
which is minimal with respect to inclusion. The support of the corresponding term in $U(M)$ is 
$\operatorname{supp}(P_{d+1}\oplus I_c)=[1,c]\cup [d+1,n]$
and hence it does not contain the vertex $c+1$. Every other interval $[c',d']\in\mathcal Y$ contains $[c,d]$.
Therefore $
c'\leq c< c+1\leq d\leq d',
$
so $c+1$ is also absent from $
\operatorname{supp}(P_{d'+1}\oplus I_{c'}).
$

Now let $Z([a,b])$ be a summand of $M$. The compatibility
condition between $Z([a,b])$ and $\Omega Z([c,d])$ is
$$
b\leq c,
\qquad\text{or}\qquad
a\geq d+1,
\qquad\text{or}\qquad
c+2\leq a\leq b\leq d-1.
$$
In each of these three cases,
$
c+1\notin[a,b].
$

Similarly, the compatibility condition between $Z(P_r)$ and
$\Omega Z([c,d])$ is
$
r\geq d+1.
$
Thus
$
c+1\notin\operatorname{supp}P_r=[r,n].
$
It follows that
$
c+1\notin\operatorname{supp}U(M).
$

\medskip
\noindent
Case 2: $\mathcal{Y}=\varnothing$ and
$\mathcal{R}\neq\varnothing$.

For every $Z([a,b])$ occurring in $M$ and every
$Z(P_r)$ occurring in $M$, their compatibility condition gives
$
a\geq2.
$
Furthermore,
$
r\geq2
$
for every $r\in\mathcal{R}$. Hence none of the summands of $U(M)$
has vertex $1$ in its support. Therefore
$
1\notin\operatorname{supp}U(M).
$

\medskip
\noindent
Case 3: $\mathcal{Y}=\varnothing=\mathcal{R}$.

In this case, $
U(M)=\bigoplus_{[a,b]\in\mathcal{X}}[a,b].
$
Since every interval satisfies $b<n$, none of them contains the
vertex $n$. Thus
$
n\notin\operatorname{supp}U(M).
$

In all cases, there exists a vertex
$
j\in{1,\ldots,n}
$ such that
$
j\notin\operatorname{supp}U(M),
$
which is equivalent to 
$
\mathrm{Hom}_{\Bbbk A_n}(P_j,U(M))=0.
$
By Theorem~\ref{thm:main result}, it follows that
$
(M,L(P_j))
$
is a basic $\tau$-rigid pair. Every basic $\tau$-rigid pair over
an algebra with $n$ simple modules has at most $n$ indecomposable
summands. Hence
$
|M|+1\leq n,
$
and therefore
$
|M|\leq n-1.
$

Finally, if $M$ is a basic $\tau$-tilting $T(\Bbbk A_n)$-module, then
$
|M|=n.
$
The preceding inequality shows that $M$ cannot avoid all the
modules $L(P_i)$. Consequently,
$
L(P_i)\in\operatorname{add}M
$
for at least one $i$.
\end{proof}

\begin{rem} There is a simplicial interpretation of Corollary \ref{cor:without projective summands}: the induced subcomplex obtained by deleting all vertices $\{L(P_1),\cdots, L(P_n)\}$ has dimension at most $n-2$.
\end{rem}

\section{Examples of type $D_4$}

In this section we use Theorem~\ref{thm:main result} and the compatibility graph to determine the basic $\tau$-rigid modules of trivial extensions of quivers of $D_4$ type.

\begin{exm} \label{exm:ssD_4}Let $Q_{\rm ss}$ be the following quiver of type $D_4$
$${\xymatrix@R=0.5ex{
&&3 \\
1 \ar@{<-}[r]  &{2}\ar@{->}[ur]
\ar@{->}[dr]\\
&&4 }}$$	
 We denote $M_d$ the unique indecomposable $\Bbbk Q_{\rm ss}$-module with dimension vector 
$d=(d_1,d_2,d_3,d_4).$
Then 
\begin{align*}P_1&=S_1=M_{1000}, \ P_2=M_{1111}, \ P_3=S_3=M_{0010}, \ P_4=S_4=M_{0001},\\
I_1&=M_{1100}, \ I_2=S_2=M_{0100}, \ I_3=M_{0110}, \ I_4=M_{0101}.
\end{align*}
Write
$$X_d=Z(M_d), \quad \Omega_d=\Omega Z(M_d), \quad R_i=Z(P_i), \quad L_i=L(P_i).$$
Then, by Lemma~\ref{lem:ind of tau-rigid}, there are twenty indecomposable $\tau$-rigid $T(\Bbbk Q_{\rm ss})$-modules 
\begin{align*}
&X_{0100}<X_{1100}< X_{0110}<X_{0101}< X_{1110}< X_{1101}< X_{0111}<X_{1211}<R_1< R_3\\
&< R_4<L_1< L_2< L_3< L_4<\Omega_{0100}<\Omega_{1100}<  \Omega_{0110}< \Omega_{0101}< \Omega_{1211}.
\end{align*} We extend this ordering the vertices of support $\tau$-tilting complex by requiring 
$$L_1[1]<L_2[1]<L_3[1]<L_4[1]$$ and by declaring every shifted projective vertex $L_i[1]$ to be larger than every module vertex. As in $A_3$ case, Theorem~\ref{thm:main result} determines the support $\tau$-tilting compatibility graph $G_{T(\Bbbk Q_{\rm ss})}^{\rm s\tau}$:
\begin{tiny}\[
\begin{array}{c|c|c|c}
    M & \operatorname{supp}U(M)& \text{adjacent shifted vertices}&\text{compatible module vertices}\\
     \hline
    X_{0100} & \{2\} & L_1[1], L_3[1], L_4[1] & X_{1100}, X_{0110}, X_{0101}\\
    X_{1100} & \{1,2\} & L_3[1], L_4[1]& X_{0110}, X_{0101},X_{1110}, X_{1101}, X_{1211}, L_1\\
     X_{0110} & \{2,3\}& L_1[1], L_4[1]& X_{0101}, X_{1110}, X_{0111}, X_{1211}, L_3\\
      X_{0101} & \{2,4\}& L_1[1], L_3[1]& X_{1101}, X_{0111}, X_{1211},L_4\\
       X_{1110} & \{1,2,3\}& L_4[1]& X_{1101}, X_{0111}, X_{1211},L_1,L_2,L_3\\
        X_{1101} & \{1,2,4\}& L_3[1]&  X_{0111}, X_{1211},L_1,L_2,L_4\\
         X_{0111} & \{2,3,4\}& L_1[1]&   X_{1211},L_2,L_3,L_4\\
          X_{1211} & \{1,2,3.4\}& \varnothing & \varnothing\\
     \hline R_1&\{1\}& L_2[1], L_3[1], L_4[1]& R_3,R_4,L_1,\Omega_{0100}, \Omega_{0110}, \Omega_{0101}\\
     R_3&\{3\} & L_1[1],L_2[1], L_4[1]& R_4,L_3,\Omega_{0100}, \Omega_{1100}, \Omega_{0101}\\
     R_4&\{4\} & L_1[1],L_2[1], L_3[1]&L_4,\Omega_{0100}, \Omega_{1100}, \Omega_{0110}\\
     \hline L_1 &\{1,2\} & L_3[1], L_4[1]& L_2,L_3, L_4, \Omega_{0110}, \Omega_{0101}, \Omega_{1211}\\
      L_2 &\{1,2,3,4\} & \varnothing& L_3, L_4\\
       L_3 &\{2,3\} & L_1[1], L_4[1]& L_4,\Omega_{1100}, \Omega_{0101}, \Omega_{1211}\\
       L_4 &\{2,4\} & L_1[1], L_3[1]& \Omega_{1100}, \Omega_{0110}, \Omega_{1211}\\
       \hline \Omega_{0100} &\{1,2,3,4\} & \varnothing &  \Omega_{1100}, \Omega_{0110}, \Omega_{0101}\\
       \Omega_{1100} &\{2,3,4\} & L_1[1] &  \Omega_{0110}, \Omega_{0101}, \Omega_{1211}\\
       \Omega_{0110} &\{1,2,4\} & L_3[1] &  \Omega_{0101}, \Omega_{1211}\\
       \Omega_{0101} &\{1,2,3\} &  L_4[1] & \Omega_{1211}\\
       \Omega_{1211} &\{1,2,3,4\} & \varnothing & \varnothing
\end{array}
\]\end{tiny}
In addition \begin{align*}&L_1[1]\mathrel{\relbar} L_2[1],  L_1[1]\mathrel{\relbar} L_3[1],  L_1[1]\mathrel{\relbar} L_4[1], \\
& L_2[1]\mathrel{\relbar} L_3[1],  L_2[1]\mathrel{\relbar} L_4[1],  L_3[1]\mathrel{\relbar} L_4[1]\end{align*}
are all edges. Therefore, $G_{T(\Bbbk Q_{\rm ss})}^{\rm s\tau}$ has $108$ edges and $T(\Bbbk Q_{\rm ss})$ has $72$ basic rank-two $\tau$-rigid modules.

Using these edges and the intersections of neighbours, a direct clique enumeration gives $168$ triangles. They split according to the number of shifted projective vertices as
$$(t_{3,0},t_{2,1}, t_{1,2}, t_{0,3})=(92,54,18,4),$$
where $t_{r,s}$ denotes the number of vertices of a triangle with $r$ module vertices and $s$ shifted projective vertices. 

The $168$ triangles are precisely the basic rank-three $\tau$-rigid pairs, i.e. the almost complete support $\tau$-tilting pairs. Each has exactly two complements. Therefore the total triangle-four-clique incidence number is $2\cdot168$. Every four-clique contains exactly four triangles, so there are exactly $\frac{2\cdot168}{4}=84$ four--cliques. They split according to the number of shifted projective vertices as
$$(q_{4,0}, q_{3,1}, q_{2,2}, q_{1,3}, q_{0,4})=(39,28,12,4,1),$$
where $q_{r,s}$ denotes the number of support $\tau$-tilting pairs $(M,L(P))$ with $|M|=r$ and $|P|=s$.
\end{exm}

\begin{exm} \label{exm:mixD_4} Let $Q_{\rm mix}$ be the following quiver of type $D_4$
$${\xymatrix@R=0.5ex{
&&3 \\
1 \ar@{->}[r]  &{2}\ar@{->}[ur]
\ar@{->}[dr]\\
&&4 }}$$	
We denote $M_d$ the unique indecomposable $\Bbbk Q_{\rm mix}$-module with dimension vector 
$d=(d_1,d_2,d_3,d_4).$
Then 
\begin{align*}P_1&=M_{1111}, \ P_2=M_{0111}, \ P_3=S_3=M_{0010}, \ P_4=S_4=M_{0001},\\
I_1&=S_1=M_{1000}, \ I_2=M_{1100}, \ I_3=M_{1110}, \ I_4=M_{1101}.
\end{align*}
Write
$$X_d=Z(M_d), \quad \Omega_d=\Omega Z(M_d), \quad R_i=Z(P_i), \quad L_i=L(P_i).$$
Then, by Lemma~\ref{lem:ind of tau-rigid}, there are twenty indecomposable $\tau$-rigid $T(\Bbbk Q_{\rm mix})$-modules 
\begin{align*}
&X_{1000}<X_{0100}< X_{1100}<X_{0110}< X_{0101}< X_{1110}< X_{1101}<X_{1211}<R_2< R_3\\
&< R_4<L_1< L_2< L_3< L_4<\Omega_{0100}<\Omega_{1100}<  \Omega_{1110}< \Omega_{1101}< \Omega_{1211}.
\end{align*} We extend this ordering the vertices of support $\tau$-tilting complex by requiring 
$$L_1[1]<L_2[1]<L_3[1]<L_4[1]$$ and by declaring every shifted projective vertex $L_i[1]$ to be larger than every module vertex. As in $A_3$ case, Theorem~\ref{thm:main result} determines the support $\tau$-tilting compatibility graph $G_{T(\Bbbk Q_{\rm mix})}^{\rm s\tau}$:
\begin{tiny}\[
\begin{array}{c|c|c|c}
    M & \operatorname{supp}U(M)& \text{adjacent shifted vertices}&\text{compatible module vertices}\\
     \hline
    X_{1000} & \{1\} & L_2[1], L_3[1], L_4[1] & X_{1100}, X_{1110}, X_{1101},L_1,L_3,L_4,\Omega_{1110}, \Omega_{1101}, \Omega_{1211}\\
    X_{0100} & \{2\} & L_1[1], L_3[1],L_4[1]& X_{1100}, X_{0110},X_{0101}, X_{1110}, X_{1101}, X_{1211}\\
     X_{1100} & \{1,2\}& L_3[1], L_4[1]& X_{1110}, X_{1101}\\
      X_{0110} & \{2,3\}& L_1[1], L_4[1]& X_{0101}, X_{1110}, X_{1211},R_2,R_3,L_1,L_2,L_3\\
       X_{0101} & \{2,4\}& L_1[1], L_3[1]& X_{1101}, X_{1211},R_2, R_4, L_1,L_2,L_4\\
        X_{1110} & \{1,2,3\}& L_4[1]&  X_{1101}, X_{1211},L_1,L_3\\
         X_{1101} & \{1,2,4\}& L_3[1]&   X_{1211},L_1, L_4\\
          X_{1211} & \{1,2,3.4\}& \varnothing & L_1\\
     \hline R_2&\{2,3,4\}& L_1[1]& R_3,R_4,L_2\\
     R_3&\{3\} & L_1[1],L_2[1], L_4[1]& R_4,L_2, L_3,\Omega_{0100}, \Omega_{1100}, \Omega_{1101}\\
     R_4&\{4\} & L_1[1],L_2[1], L_3[1]&L_2,L_4,\Omega_{0100}, \Omega_{1100}, \Omega_{1110}\\
     \hline L_1 &\{1,2,3,4\} & \varnothing& L_2,L_3, L_4\\
      L_2 &\{1,2,3,4\} & \varnothing& L_3, L_4, \Omega_{0100}\\
       L_3 &\{1,2,3\} &L_4[1]& L_4,\Omega_{0100}, \Omega_{1101}, \Omega_{1211}\\
       L_4 &\{1,2,4\} & L_3[1]& \Omega_{0100}, \Omega_{1110}, \Omega_{1211}\\
       \hline \Omega_{0100} &\{1,2,3,4\} & \varnothing &  \Omega_{1100}, \Omega_{1110}, \Omega_{1101},\Omega_{1211}\\
       \Omega_{1100} &\{1,3,4\} & L_2[1] &  \Omega_{1110}, \Omega_{1101}\\
       \Omega_{1110} &\{1,4\} &L_2[1], L_3[1] &  \Omega_{1101}, \Omega_{1211}\\
       \Omega_{1101} &\{1,3\} & L_2[1], L_4[1] & \Omega_{1211}\\
       \Omega_{1211} &\{1,2,3,4\} & \varnothing & \varnothing
\end{array}
\]\end{tiny}
In addition \begin{align*}&L_1[1]\mathrel{\relbar} L_2[1],  L_1[1]\mathrel{\relbar} L_3[1],  L_1[1]\mathrel{\relbar} L_4[1], \\
& L_2[1]\mathrel{\relbar} L_3[1],  L_2[1]\mathrel{\relbar} L_4[1],  L_3[1]\mathrel{\relbar} L_4[1]\end{align*}
are all edges. Therefore, $G_{T(\Bbbk Q_{\rm mix})}^{\rm s\tau}$ has $110$ edges and $T(\Bbbk Q_{\rm mix})$ has $76$ basic rank-two $\tau$-rigid modules.

Using these edges and the intersections of neighbours, a direct clique enumeration gives $172$ triangles. They split according to the number of shifted projective vertices as
$$(t_{3,0},t_{2,1}, t_{1,2}, t_{0,3})=(101,50,17,4).$$
There are exactly $86$ four-cliques. They split according to the number of shifted projective vertices as
$$(q_{4,0}, q_{3,1}, q_{2,2}, q_{1,3},q_{0,4})=(44,26,11,4,1).$$
\end{exm}

The total numbers of basic support $\tau$-tilting modules in
Examples~\ref{exm:ssD_4} and~\ref{exm:mixD_4} are $84$ and $86$, respectively. These numbers
were already observed in \cite{WZ}, where they provide a negative
answer in type $D$ to the orientation-independence question considered
there. We also note that, for the source-sink orientation $Q_{\rm ss}$,
the number $84$ follows from the formula
\[
6\cdot 4^{\,n-2}
-
2\binom{2n-4}{n-2}
\]
in \cite{AA,BR}, since $T(\Bbbk Q_{\rm ss})$ is a symmetric algebra
with radical cube zero.

The compatibility graphs computed above contain finer information than
these total numbers. Indeed, their clique distributions determine the
numbers of basic $\tau$-rigid pairs according to the numbers of module
and shifted-projective summands. We encode these face distributions by
an $F$-triangle, following the terminology of Chapoton \cite{Chapoton}.
For a quiver $Q$, define
\[
F_Q(x,y)
=
\sum_{r,s\geq 0} f_{r,s}(Q)x^r y^s,
\]
where $f_{r,s}(Q)$ denotes the number of basic $\tau$-rigid pairs
$(M,L(P))$ over $T(\Bbbk Q)$ satisfying $
|M|=r, |P|=s.$
Thus the exponent of $x$ records the number of module vertices and the
exponent of $y$ records the number of shifted-projective vertices.
In particular, the coefficients of $F_Q(x,y)$ are precisely the
bigraded face numbers obtained from the support $\tau$-tilting
compatibility graph.

For the source--sink orientation $Q_{\rm ss}$, we obtain
\[
\begin{aligned}
F_{\rm ss}(x,y)
={}&(1+y)^4
+x\bigl(20+30y+18y^2+4y^3\bigr)\\
&+x^2\bigl(72+54y+12y^2\bigr)
+x^3(92+28y)
+39x^4.
\end{aligned}
\]
For the mixed orientation $Q_{\rm mix}$, we obtain
\[
\begin{aligned}
F_{\rm mix}(x,y)
={}&(1+y)^4
+x\bigl(20+28y+17y^2+4y^3\bigr)\\
&+x^2\bigl(76+50y+11y^2\bigr)
+x^3(101+26y)
+44x^4.
\end{aligned}
\]
These two $F$-triangles make the dependence on the orientation visible
at every face dimension and exhibit
more precisely how the two support $\tau$-tilting complexes differ.

\noindent{\bf Acknowledgements}. \quad  
This project is supported by  the National Natural Science Foundation of China (No. 12171207).

\bibliography{}

\begin{thebibliography}{9999}


\bibitem{A} T. Adachi, \emph{The classification of $\tau$-tilting modules over Nakayama algebras}, J. Algebra 452(2016), 227-262.

\bibitem{AA} T. Adachi and T. Aoki, \emph{The number of two-term tilting complexes over symmetric algebras with radical cube zero}, Ann. Comb. (1)27 (2023), 149-167.

\bibitem{AIR} T. Adachi, O. Iyama and I. Reiten, \emph{$\tau$-tilting theory}, Compos. Math., 150(3)(2014), 415-452.

\bibitem{ABS} I. Assem, T. Br{\"u}stle, R. Schiffler, \emph{Cluster tilted algebras as trivial extensions}, Bull. Lond. Math. Soc. 40 (1) (2008),  151-162


 \bibitem{AHR} I. Assem, D. Happel and O. Roldan, \emph{Representation-finite trivial extension algebras}, J. Pure Appl. Algebra 33 (1984), 235–242.

\bibitem{AMN} H. Asashiba, Y. Mizuno and K. Nakashima, \emph{Simplicial complexes and tilting theory for Brauer tree algebras}, J. Algebra 551 (2020), 119–153. 





\bibitem{ASS}	I. Assem, D. Simson and A. Skowro\'nski, \emph{Elements of the representation theory of associative algebras}. Vol. 1. Techniques of representation theory,	London Mathematical Society Student Texts 65, Cambridge Univ. Press Cambridge, 2006. 	
		
\bibitem{BR} E. Barnard and N. Reading, \emph{Coxeter-biCatalan combinatorics}, J. Algebraic Combin. 47 (2018), no. 2, 241–300.

\bibitem{BIRS} A. B. Buan, O. Iyama, I. Reiten, J. Scott, \emph{Cluster structures for 2-Calabi–Yau categories and unipotent
groups}, Compos. Math. 145(2009), 1035–1079.



\bibitem{Chapoton} F. Chapoton, \emph{Enumerative properties of generalized associahedra}, S\'em. Lothar. Combin. 51 (2004), Art. B51b.



\bibitem{DIJ} L. Demonet, O. Iyama and G. Jasso, \emph{$\tau$-tilting finite algebras, bricks and g-vectors}, Int. Math. Res. Not. 3(2019), 852-892. 



\bibitem{FGR} R. M. Fossum, P. A. Griffith and I. Reiten, \emph{Trivial extensions of abelian categories}, Lecture notes in mathematics 456, Springer-Verlag, 1975.


\bibitem{GH} H. Gao and Z. Huang, \emph{Silting modules over triangular matrix rings}, Taiwanese J. Math. 24(6)(2020), 1417-1437.




\bibitem{Happel} D. Happel, \emph{Triangulated Categories in the Representation of Finite Dimensional Algebras}, London Mathematical Society Lecture Note Series. Cambridge University Press, 1988.


 \bibitem{HW} D. Hughes and J. Waschbusch, \emph{Trivial extensions of tilted algebras}, Proc. London Math. Soc. 46 (1983), 347–364.

\bibitem{IZ} O. Iyama and X. Zhang, \emph{Classifying $\tau$-tilting modules over the Auslander algebra of $\Bbbk[x]/(x^n)$}, J. Math. Soc. Japan 72(3)(2020), 731-764.

\bibitem{LXZ} Z. Li, M. Xu and Z. Zhao, \emph{The $\tau$-rigid objects of a trivial extension of a hereditary abelian category}, Taiwanese J. Math. 29(5)(2025), 859-874.

\bibitem{LiZhang}  Z. Li and X. Zhang, \emph{$\tau$-tilting modules over trivial extensions}, Internat. J. Algebra Comput. 32(3)(2022),617-628.

\bibitem{Mizuno} Y. Mizuno, \emph{Classifying $\tau$-tilting modules over preprojective algebras of
Dynkin type}, Math. Z. 277(2014), no. 3-4, 665–690.



\bibitem{PMH} Y. Peng, X. Ma and Z. Huang, \emph{$\tau$-tilting modules over triangular matrix artin algebras}, Internat. J. Algebra Comput. 31(4)(2021), 639-661.

\bibitem{PPP} Y. Palu, V. Pilaud and P.-G. Plamondon, \emph{Non-kissing complexes and $\tau$-tilting for gentle algebras}, Mem. Am. Math. Soc. 274 (2021), no. 1343.





\bibitem{Tachikawa80} H. Tachikawa, \emph{Representations of trivial extensions of hereditary algebras}, in: Proc. ICRA II, in: Lecture
Notes in Math., vol. 832, Springer-Verlag, New York, 1980, 579–599.


\bibitem{WZ} Q. Wang and Y. Zhang, \emph{On $\tau$-tilting modules over trivial extensions of gentle tree algebras}, Kodai Math. J. 49 (1)(2026) 80 - 94.


\bibitem{XieZhang} Z. Xie and X. Zhang, \emph{A bijection theorem for Gorenstein projective $\tau$-tilting modules}, J. Algebra. Appl. 23(2024), no,10, article 2450168.



\bibitem{Zhang} Y. Zhang, \emph{Gluing support $\tau$-tilting modules via symmetric ladders of height 2}, Science China Math. 67(2024), 2217-2236.

\bibitem{Zito} S. Zito, \emph{$\tau$-tilting finite cluster-tilted algebras}, Proc. Edinb. Math. Soc. 63(4)(2020), 950-955.

\end{thebibliography}

\vskip 8pt

 {\footnotesize \noindent Rong Rong\\
 School of Mathematics and Statistics, Xuzhou University of Technology, Xuzhou, 221111, Jiangsu, PR China}
 
 { \footnotesize \noindent Zhi-Wei Li\\
School of Mathematics and Statistics, Jiangsu Normal University, Xuzhou 221116, Jiangsu, PR China\\

\end{document}